\numberwithin{equation}{section}
\newtheorem{thm}{Theorem}[section]
\newtheorem{lem}[thm]{Lemma}
\newtheorem{problem}[thm]{Question}
 \theoremstyle{definition}
 \newtheorem{defn}[thm]{Definition}
 \theoremstyle{remark}
 \newtheorem{rem}[thm]{Remark}
 \theoremstyle{definition}
\newcommand{\CC}{\mathbb{C}}
\newcommand{\XX}{\mathbb{X}}
\newcommand{\PP}{\mathbb{P}}
\newcommand{\pr}{\mathbb{P}}
\begin{document}

\title{Star configurations on generic hypersurfaces}

\author[E. Carlini]{Enrico Carlini}
\address[E. Carlini]{Dipartimento di Scienze Matematiche,
Politecnico di Torino, Turin, Italy}
\email{enrico.carlini@polito.it}

\author[E. Guardo]{Elena Guardo}
\address[E. Guardo]{Dipartimento di Matematica,
Universit\`a di Catania, Catania, Italy}
\email{guardo@dmi.unict.it}

\author[A. Van Tuyl]{Adam Van Tuyl}
\address[A. Van Tuyl]{Department of Mathematical Sciences,
Lakehead University, Thunder Bay, ON, Canada, P7B 5E1}
\email{avantuyl@lakeheadu.ca}


\keywords{star configurations point, generic hypersurfaces}
\subjclass[2000]{14M05, 14H50}
\thanks{Version: April 10, 2012}


\begin{abstract}
Let $F$ be a homogeneous polynomial in $S =
\mathbb{C}[x_0,\ldots,x_n]$.  Our goal is to understand a
particular polynomial decomposition of $F$; geometrically, we wish
to determine when the hypersurface defined by $F$ in
$\mathbb{P}^n$ contains a star configuration. To solve this
problem, we use techniques from commutative algebra and algebraic
geometry to reduce our question to computing the rank of a matrix.
\end{abstract}

\maketitle

\begin{center}
{\it To A.V. Geramita on the occasion of his 70th birthday.}
\end{center}

\section{Introduction}

Throughout this paper we work over the polynomial ring
$S=\mathbb{C}[x_0,\ldots,x_n]$.  Given any homogeneous polynomial
$F \in S$ of degree $d$, one usually writes $F$ as a sum of
monomials of degree $d$, i.e., $F = \sum c_im_i$, where $c_i \in
\mathbb{C}$ and $m_i$ is a monomial of degree $d$. However,
different presentations are possible;  for example, one can look
for a sum of powers presentation of $F$, that is, find linear
forms $\ell_1,\ldots,\ell_k$ such that
\[F = c_1\ell_1^d + c_2\ell_2^d + \cdots + c_k\ell_k^d.\]
Given a possible presentation, one can then ask many relevant
questions about the presentation.  Two such problems would be to
find the minimal number of summands needed for the generic form,
or for any given form, find an explicit presentation. Questions of
this type were explored in the work \cite{Ca04JA,Ci01}.

Presentations of $F$ can also be reinterpreted as geometric
questions.  As an example, if $F$ has a sum of powers
presentation as above, then $F$ is an element of the ideal $I =
(\ell_1,\ldots,\ell_k)$.  But this means that the hypersurface
defined by $F$ in $\mathbb{P}^n$ contains the variety defined by
$I$.  A presentation question could therefore be reformulated as
asking if a (generic) hypersurface contains a special subvariety.
This type of question has a long history, e.g.,
a number of authors have investigated the question of
when a hypersurface contains a complete intersection (the papers
\cite{CCG1,CCG2,Gro05,Lef21,Sev06,Szabo} form a partial list of
papers devoted to this topic).

In this paper, we want to investigate the following type of
polynomial decomposition.   To state our question, we use the
notation $[l] = \{1,\ldots,l\}$.

\begin{problem}\label{question}
For which tuples $(n,l,r,d) \in \mathbb{N}_+^4$ is it possible to
present a generic homogeneous form $F$ of degree $d$ in $n+1$
variables as
\begin{equation}\label{expression}
F=\sum_{
\footnotesize{
\begin{array}{c}
\sigma = \{i_1,\ldots,i_r\} \subseteq [l],\\
 |\sigma| = r
\end{array}}}
L_\sigma M_\sigma
\end{equation}
where $L_{\sigma} = L_{i_1}L_{i_2} \cdots L_{i_r}$,
$\{L_1,\ldots,L_l\}$ are generic linear forms, and  $M_{\sigma}$ is a form
of degree $d-r$.
\end{problem}

In order to express $F$ in the form \eqref{expression}, we immediately notice
some simple restrictions on the tuples $(n,l,r,d)$, namely $r \leq l$ and $r \leq d$.
The goal of this paper is to give an almost complete answer to this question.
Our main result is:

\begin{thm}\label{mainresult}
Let $(n,l,r,d) \in \mathbb{N}_+^4$ be such that $r\leq \min\{d,l\}$.
\begin{enumerate}
\item  If $l-r+1 < n$ and $d\gg 0$, then the generic degree $d$ form
in $n+1$ variables cannot
be written in the form \eqref{expression}.
\item If $l-r+1=n$, then the generic degree $d$ form in $n+1$ variables can
be written in the form \eqref{expression} if and only if
$(n,l,r,d)$ belongs to the following list:
\begin{enumerate}
\item[$(i)$] $(n,l,r,d) = (1,l,l,d)$ for all $d \geq l \geq 1$, or
\item[$(ii)$] $(n,l,r,d) = (2,2,1,d)$ for all $d \geq 1$, or
\item[$(iii)$] $(n,l,r,d) = (2,3,2,d)$ for all $d \geq 2$, or
\item[$(iv)$] $(n,l,r,d) = (2,4,3,d)$ for all $d \geq 3$, or
\item[$(v)$] $(n,l,r,d) = (2,5,4,d)$ for all $d \geq 5$, or
\item[$(vi)$] $(n,l,r,d) = (n,n,1,d)$ for all $n \geq 3$ and $d \geq 1$, or
\item[$(vii)$] $(n,l,r,d) = (n,n+1,2,d)$ for all $n \geq 3$ and $d \geq 2$, or
\item[$(viii)$] $(n,l,r,d) = (n,n+2,3,d)$ for all $n \geq 3$ and $d \geq 3$.
\end{enumerate}
\item If $l-r +1 >n$, then every degree $d$ form (not just the generic
one) in $n+1$ variables can be
written in the form \eqref{expression}.
\end{enumerate}
\end{thm}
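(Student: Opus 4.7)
The plan is to reformulate the decomposition as an ideal-membership condition and then to reduce the resulting dominance question to a rank computation on the tangent space. The identity $F = \sum_\sigma L_\sigma M_\sigma$ is equivalent to $F \in J := (L_\sigma : |\sigma| = r)$, and for generic linear forms $L_1,\dots,L_l$ it is a classical fact about star configurations that $J$ equals the defining ideal of the star configuration $X \subseteq \PP^n$ of codimension $c := l - r + 1$ cut out by the hyperplanes $V(L_i)$. The three cases of the theorem correspond precisely to $c > n$ (so $X$ is empty), $c = n$ (so $X$ is the reduced set of $\binom{l}{n}$ points), and $c < n$ (so $X$ is positive-dimensional). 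Setting $\Phi : S_1^l \times \prod_\sigma S_{d-r} \to S_d$, $((L_i),(M_\sigma)) \mapsto \sum_\sigma L_\sigma M_\sigma$, a generic $F$ is expressible in the form \eqref{expression} if and only if $\Phi$ is dominant.

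For part (3), with $c > n$, I would prove $J_d = S_d$ for every $d \geq r$ by showing that the $\binom{l}{r}$ products $L_\sigma$ span $S_r$ for generic $(L_i)$; the edge case $l = n + r$ gives exactly $\binom{n+r}{r} = \dim S_r$ products, which form a basis by a classical specialization argument, and the case $l > n + r$ follows a fortiori. For part (1), with $c < n$ the Hilbert function $H_X(d)$ of the star configuration grows unboundedly in $d$, so $\dim J_d = \dim S_d - H_X(d)$ in sufficiently high degree; since the parameter space of $(L_i)$ has fixed dimension $l(n+1)$, a fiber count yields $\dim \mathrm{Im}\,\Phi \leq \dim J_d + l(n+1) < \dim S_d$ whenever $H_X(d) > l(n+1)$, which occurs for all $d \gg 0$.

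In the critical case (2), $H_X(d)$ stabilizes at $\binom{l}{n}$ and the coarse parameter count alone does not settle matters. I would decide dominance of $\Phi$ via surjectivity of its differential at a generic point; a direct calculation yields
\[
d\Phi\bigl((\dot L_j),(\dot M_\sigma)\bigr) = \sum_\sigma L_\sigma\,\dot M_\sigma + \sum_{j=1}^l \dot L_j\, G_j, \qquad G_j := \sum_{\sigma \ni j} L_{\sigma \setminus \{j\}} M_\sigma
\]
so $\mathrm{Im}\, d\Phi = J_d + \sum_{j=1}^l S_1 \cdot G_j$. The question then reduces to whether this subspace fills $S_d$, equivalently whether the matrix whose columns represent the products $L_\sigma \cdot x^\alpha$ and $x_i \cdot G_j$ has maximal rank $\dim S_d$; this is the rank reduction promised in the abstract.

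To finish (2), for each tuple (i)--(viii) I would verify full rank by explicit computation with a convenient specialization of $(L_i)$ and $(M_\sigma)$: (i) is trivial since binary forms split into linear factors; (ii)--(v) reduce to classical plane-curve/apolarity arguments; (vi)--(viii) can be treated by choosing the $L_i$ adapted to a coordinate subspace. Conversely, for every other admissible tuple I would prove rank deficiency, starting from the necessary inequality $l(n+1) \geq \binom{l}{n}$ (which already eliminates most candidates) and sharpening where this count is nearly tight via Hilbert-function bounds for $X$. The main obstacle will be this classification: the list is non-uniform, with different geometric phenomena in $\PP^2$ versus $\PP^n$ for $n \geq 3$, so the positive verifications for families (v)--(viii) and the matching negative exclusions will each require their own careful combinatorial input.
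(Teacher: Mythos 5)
Your overall strategy tracks the paper's quite closely: you rewrite the decomposition as membership of $F$ in the ideal generated by the $L_\sigma$, identify that ideal with the star-configuration ideal via the trichotomy $c>n$, $c=n$, $c<n$, handle $c>n$ by showing the products span $S_r$, handle $c<n$ by a fiber-dimension count, and reduce the critical case $c=n$ to surjectivity of $d\Phi$ at a generic point. Your computation of $d\Phi$ and the forms $G_j$ is exactly the paper's $Q_j$ of Lemma~\ref{tgspaceideallem}, and the plan to verify full rank by specializing the $M_\sigma$ and inspecting an evaluation matrix is what the paper does in Sections 5--6.

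However, there is a concrete gap in your exclusion argument. Your ``necessary inequality'' $l(n+1)\geq \binom{l}{n}$ comes from estimating $\dim\operatorname{Im}\Phi$ against the affine dimension $l(n+1)=\dim S_1^l$, but this is too weak: for $n\geq 3$ and $l=n+3$ one has $l(n+1)-\binom{l}{n}$ possibly positive (e.g.\ $n=3$, $l=6$: $24\geq 20$), so your bound fails to eliminate the tuples $(n,n+3,4,d)$, which must be excluded from the list in part (2). The paper instead projectivizes and works with the incidence correspondence $\Sigma_{d,l,r}\subset\PP S_d\times\mathcal{H}$, where $\mathcal{H}\subset(\check{\PP^n})^l$ parametrizes the $l$ \emph{hyperplanes} (not the linear forms), so $\dim\mathcal{H}=ln$. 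This tighter count $ln\geq\binom{l}{n}$ does kill $l=n+3$ (for $n>2$), since $n(n+3)-\binom{n+3}{n}=-(n+3)(n-1)(n-2)/6<0$. In your formulation you would need to argue that the generic fiber of $\Phi$ has dimension at least $l$ (from rescaling each $L_i$ and compensating in the $M_\sigma$) to recover the sharp bound; without that observation the exclusion of $l=n+3$ is missing. Two smaller points: the exclusion of $(2,5,4,4)$ (the ``$d\geq 5$'' in case (v)) is not captured by any $d$-independent inequality and needs the finer analysis of [CVT], which you implicitly defer to but should acknowledge; and for family (viii) the rank verification is not a single specialization but an induction on $n$ exploiting a block structure in the evaluation matrix (the $n$-case appears as a principal submatrix of the $(n{+}1)$-case), together with a key nonsingularity lemma for the all-ones-minus-identity matrix, none of which is sketched in your plan.
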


Geometrically, Question \ref{question} is asking if the
generic degree $d$ hypersurface contains a star configuration (see
the definition in the next section).  Question \ref{question} was studied in
the case that $n=2$ by the first and third author in \cite{CVT}.  We
refer the reader to this paper for the statements of
 Theorem \ref{mainresult} involving  $n=2$.  Note that
our answer is almost complete since there may be tuples
$(n,l,r,d)$ with $l-r+1 <n$ with $d$ small enough such that Question
\ref{question} has a positive answer.  However, we currently
know of no such examples.

Our paper is structured as follows.  In the next section, we give
two interpretations of Question \ref{question}:  an algebraic
version and a geometric version.  The geometric version of this
question asks about star configurations on hypersurfaces.  We also
prove some cases of Theorem \ref{mainresult}.  In Section 3, we
look at non-existence results, that is, look for ways to eliminate
various $(n,l,r,d)$ from consideration.  By the end of Section 3,
we will have proved all of Theorem \ref{mainresult} except the
part of statement $(2)$ involving the tuples $(n,n+2,3,d)$.
The remainder of the paper is devoted to the proof of this case.
In Section 4, we translate our question again.  The new
translation reduces our question to showing that a specific
evaluation matrix has maximal rank.  We then answer this
corresponding linear algebra question in Sections 5 and 6.

{\bf Acknowledgements}  We would like to thank A.V. Geramita for
answering some of our questions while preparing this paper and
sharing with us \cite{GHM}.   The first author acknowledges the
support of the Politecnico di Torino and of the University of
Catania. The third author acknowledges the support of NSERC.


\section{Star Configurations}

We reformulate Question \ref{question} as an algebraic question
and a geometric question.   To state the geometric counterpart, we
will introduce star configurations.

We begin with the algebraic reformulation.
In $S = \mathbb{C}[x_0,\ldots,x_n]$, let
$L_1,\ldots,L_l$ be $l$ linear homogeneous forms.
We let $[l] = \{1,\ldots,l\}$
and we set
\[L_{\sigma} := L_{i_1}L_{i_2} \cdots L_{i_r} ~~\mbox{for any $\sigma = \{i_1,\ldots,i_r\} \subseteq [l]$.} \]
We shall write $V(L)$ to mean the hypersurface in $\PP^n$ defined
by $L$. If $L$ is linear, then $V(L)$ is usually called a {\bf
hyperplane}. We say that the $l$ linear homogeneous forms
$L_1,\ldots,L_l$ are {\bf general linear forms} if any $n+1$ of
the linear forms are linearly independent. If $l < n+1$, then we
require that the $l$ linear forms are linearly independent.

The algebraic reformulation of Question \ref{question} is an ideal
membership problem.

\begin{problem}[Algebraic Question]\label{algebraQ}
Fix a tuple $(n,l,r,d) \in \mathbb{N}_+^4$ with $r \leq \min\{d,l\}$. Given a generic homogeneous form $F \in S =
\mathbb{C}[x_0,\ldots,x_n]$ of degree $d$, is it possible to find
$l$ general linear forms $L_1,\ldots,L_l$ such that
$F \in I = (L_\sigma ~|~  \sigma \subseteq [l] ~~\mbox{and}~ |\sigma| = r )?$
\end{problem}

The geometric interpretation of Question \ref{question} is in terms of star
configurations.

\begin{defn}  Let $L_1,\ldots,L_l$ be $l$ general linear forms
in $S = \mathbb{C}[x_0,\ldots,x_n]$.  Let $r \leq l$ be any positive
integer.  The {\bf star configuration} of type $(l,r)$, denoted
$\XX(l,r)$, is the algebraic variety of $\PP^n$
defined by the homogeneous ideal
\[J = \bigcap_{
\footnotesize{
\begin{array}{c}
\tau = \{j_1,\ldots,j_{l-r+1}\} \subseteq [l] \\
|\tau| = l-r+1
\end{array}
}}
(L_{j_1},\ldots,L_{j_{l-r+1}}).\]
Equivalently, the algebraic variety $\XX(l,r) = V(J)\subset\mathbb{P}^n$
is the union of all the linear spaces obtained by intersecting $l-r+1$ of the
hyperplanes $\{ L_i=0 \}$ in all possible ways.
\end{defn}

The  name ``star configuration'' was first suggested by A.V.
Geramita because a star configuration $\XX(5,4) \subseteq \PP^2$
resembles a star drawn with five lines.  Star configurations have
proven to be interesting varieties, in part, because they exhibit
some nice extremal behavior.  To date, much of the research
 (see
\cite{AS,BCH,CVT,CHT,GHM,GMS,HH,Shin1})
 has
focused on the case of star configurations of the type $(l,l-n+1)$;
in this case, $\XX(l,l-n+1)$ is a finite set of points.  This fact,
and others, will follow from the next
lemma which recalls some of the relevant properties of star
configurations for this project.

\begin{lem}\label{productintersectionlemma}
Let $L_1,\ldots,L_l$ be $l$ general linear forms of
$S = \mathbb{C}[x_0,\ldots,x_n]$ and $0 < r \leq l$.
\begin{enumerate}
\item[$(i)$]  If $l -r + 1 > n$, then $\XX(l,r) = \emptyset$.
\item[$(ii)$] If $l -r + 1 \leq n$, then $\dim \XX(l,r) = n -
(l-r+1)$. \item[$(iii)$] If $l-r+1 = n$, then $\XX(l,r)$ is a set
of $\binom{l}{n}$ distinct points. \item[$(iv)$] If $l -r + 1 \leq
n$, then $I_{\XX(l,r)} = (L_{\sigma} ~|~ \sigma \subseteq [l]
~\mbox{and}~ |\sigma| =r ).$
\end{enumerate}
\end{lem}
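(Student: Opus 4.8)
The plan is to prove Lemma~\ref{productintersectionlemma} by first establishing $(i)$ and $(ii)$ via dimension counting on linear spaces, then deducing $(iii)$ as a special case, and finally proving the ideal-theoretic statement $(iv)$ by a double inclusion. For $(i)$ and $(ii)$: by genericity, any $k \leq n+1$ of the hyperplanes $V(L_{j_1}),\ldots$ meet transversally, so their common intersection is a linear subspace of dimension $n-k$ (empty when $k > n$). Taking $k = l-r+1$, each component $V(L_{j_1},\ldots,L_{j_{l-r+1}})$ appearing in the intersection defining $\XX(l,r)$ is empty if $l-r+1 > n$, giving $(i)$; otherwise it is a linear space of dimension $n-(l-r+1)$, and since $\XX(l,r)$ is a finite union of these, $\dim \XX(l,r) = n-(l-r+1)$, giving $(ii)$. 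Statement $(iii)$ is the case $l-r+1 = n$ of $(ii)$ (so the components are points); the count $\binom{l}{n}$ is the number of $n$-element subsets $\tau \subseteq [l]$, and one checks distinctness: two different subsets $\tau \neq \tau'$ of size $n$ cut out different points because their union involves at least $n+1$ of the $L_i$, which have only the origin in common by genericity.

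The substantive part is $(iv)$, the equality $I_{\XX(l,r)} = (L_\sigma \mid \sigma \subseteq [l],\ |\sigma| = r)$. Write $I$ for the ideal on the right. One inclusion, $I \subseteq I_{\XX(l,r)}$, is straightforward: a generator $L_\sigma = L_{i_1}\cdots L_{i_r}$ with $\sigma = \{i_1,\ldots,i_r\}$ vanishes on every component $V(L_{j_1},\ldots,L_{j_{l-r+1}})$, because $\sigma$ and any $\tau$ of size $l-r+1$ must intersect (as $r + (l-r+1) = l+1 > l$), so some $L_{i_s}$ with $i_s \in \tau$ vanishes on that component, hence so does the product. The reverse inclusion $I_{\XX(l,r)} \subseteq I$ is where the work lies.

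For the reverse inclusion I would argue that $I$ is already a radical ideal whose zero locus is exactly $\XX(l,r)$, so by the Nullstellensatz $I = I_{V(I)} = I_{\XX(l,r)}$. That $V(I) = \XX(l,r)$ follows from the intersection argument above (a point lies in $V(I)$ iff for every $\sigma$ of size $r$ some $L_{i} = 0$ with $i \in \sigma$, iff at least $l-r+1$ of the $L_i$ vanish at the point, iff the point lies on some component of $\XX(l,r)$). The heart of the matter is therefore showing $I$ is radical. One clean route: prove directly that $I = \bigcap_{|\tau| = l-r+1}(L_{j_1},\ldots,L_{j_{l-r+1}})$, i.e. that the product-of-linear-forms ideal literally equals the defining intersection; since each ideal $(L_{j_1},\ldots,L_{j_{l-r+1}})$ is prime (generated by part of a linear system of parameters, by genericity) and a finite intersection of primes is radical, this finishes the proof. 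The inclusion $I \subseteq \bigcap (\ldots)$ is the same pigeonhole argument as before. For $\bigcap (\ldots) \subseteq I$, I would either cite the known structure of star configuration ideals from the literature (e.g.\ \cite{GHM,GMS}), or give an induction on $l$: after a generic linear change of coordinates one can take $L_l = x_n$, split the intersection into the $\binom{l-1}{l-r}$ pieces containing $L_l$ and the $\binom{l-1}{l-r+1}$ pieces not containing it, and reduce modulo $x_n$ to a star configuration ideal for $l-1$ forms in one fewer variable, applying the inductive hypothesis.

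The main obstacle I anticipate is exactly this last point: verifying $\bigcap_{|\tau|=l-r+1}(L_{j_1},\ldots,L_{j_{l-r+1}}) \subseteq (L_\sigma \mid |\sigma|=r)$, i.e.\ that every form vanishing on all the small linear components is actually a combination of the degree-$r$ square-free products. The pigeonhole direction is easy; this direction genuinely uses the generic position of the $L_i$ and is the kind of statement that is "well known" for star configurations but requires either a careful induction or an appeal to prior work. Everything else — the dimension counts, the point count in $(iii)$, and one direction of $(iv)$ — is routine combinatorial linear algebra.
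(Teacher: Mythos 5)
Your treatment of $(i)$--$(iii)$ matches the paper's, and your one-sided inclusion $I \subseteq I_{\XX(l,r)}$ via the pigeonhole $\sigma \cap \tau \neq \emptyset$ is exactly the paper's argument. You also correctly locate the real content of $(iv)$ in the reverse inclusion $J := \bigcap_\tau (L_{j_1},\ldots,L_{j_{l-r+1}}) \subseteq I$. But two points deserve comment.

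First, your framing introduces a circularity: you say the ``clean route'' to radicality of $I$ is to prove $I = J$, but $I = J$ \emph{is} statement $(iv)$ itself (since $J$, an intersection of linear primes, is manifestly radical and hence $J = I_{\XX(l,r)}$). So the detour through ``$V(I) = \XX(l,r)$ and $I$ radical, hence Nullstellensatz applies'' adds nothing --- one should just prove $J \subseteq I$ directly, which is what the paper does.

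Second, for that inclusion, your sketched induction differs from the paper's, and as sketched has a gap. You propose to take $L_l = x_n$, split the intersection by whether $l \in \tau$, and ``reduce modulo $x_n$ to a star configuration ideal for $l-1$ forms in one fewer variable.'' The paper instead works entirely inside $S = \mathbb{C}[x_0,\ldots,x_n]$ and inducts on the pair $(l,r)$ rather than on $l$ alone, using the decomposition
\[
J_{l,r} \;=\; \Bigl(\,J_{l-1,r},\, L_l\,\Bigr) \;\cap\; J_{l-1,r-1},
\]
with base cases $r = 1$ (where $J = (L_1,\ldots,L_l)$) and $r = l$ (where $J = (L_1\cdots L_l)$). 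Applying the inductive hypothesis to both factors and observing that the resulting intersection lies in $(L_\sigma \mid \sigma \subseteq [l-1],\, |\sigma| = r) + L_l \cdot (L_\sigma \mid \sigma \subseteq [l-1],\, |\sigma| = r-1) = I$ finishes the argument in $S$ directly. Your ``reduce mod $x_n$'' route forces you to leave the ring $S$, track what happens to the star configuration in $\mathbb{P}^{n-1}$ versus $\mathbb{P}^n$, and then lift back, and you have not specified base cases for the induction on $l$; as written it is a plan with a hole where the key step should be. Your fallback of citing the literature is legitimate (the paper itself notes in a remark that \cite[Proposition 2.9]{GHM} gives an alternative proof of $(iv)$), but if you want a self-contained argument, the $(l,r)$-induction above is the one to carry out.
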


\begin{proof}
$(i)$  If $l -r +1 > n$, then for any $\tau \subseteq [l]$ with $|\tau| = l -r+1$,
the ideal $(L_{j_1},\ldots,L_{j_{l-r+1}})$ must be the irrelevant ideal
because the $L_i$'s are general linear forms.  Consequently
\[\XX(l,r) = V(J) =
\bigcup_{\footnotesize{
\begin{array}{c}
\tau \subseteq [l],~~|\tau| = l-r+1
\end{array}}}
V((L_{j_1},\ldots,L_{j_{l-r+1}})) = \emptyset.\]

$(ii)$  This fact follows directly from the definition of $J =
I_{\XX(l,r)}$ and from the fact that the $L_i$'s are general
linear forms.

$(iii)$  By $(ii)$, $\XX(l,r)$ is zero-dimensional.  For any $\tau
\subseteq [l]$ with $|\tau| = l -r + 1 = n$, the ideal
$(L_{j_1},\ldots,L_{j_{n}})$ defines a point in $\PP^n$.   There
are then $\binom{l}{n}$ such ideals, each defining a different
point.

$(iv)$  Let $I$ denote the ideal on the right
in the statement.  We first show $I \subseteq I_{\XX(l,r)}$.
Take any generator of $I$, say
$L_{\sigma}$ for some $\sigma \subseteq [l]$.  We claim that for
any subset $\tau = \{j_1,...,j_{l-r+1}\} \subseteq [l]$, the generator
$L_{\sigma} \in (L_{j_1},....,L_{j_{l-r+ 1}})$.  This claim follows
once we note that $\sigma \cap \tau \neq \emptyset$.  Indeed,
if these two sets were disjoint, then $|\sigma \cup \tau| =
r + (l-r+1) = l+1 > l$, which contradicts the fact that $[l]$ has
only $l$ distinct elements.   So, each generator of
$I$ belongs to $I_{\XX(l,r)}$, thus showing one inclusion.

For the reverse inclusion, we do induction on the tuple
$(l,r)$.  If $r = 1$ and for any integer $1= r \leq l$,
\begin{eqnarray*}
I_{\XX(l,1)} &=&  \bigcap_{
\footnotesize{
\begin{array}{c}
\tau \subseteq [l] \\
|\tau| = l-1+1
\end{array}
}}
(L_{j_1},\ldots,L_{j_{l-1+1}})
=(L_1,\ldots,L_l) =(L_\sigma ~|~ \sigma \subseteq [l] ~~\mbox{and} ~~|\sigma| = 1).
\end{eqnarray*}
In the case that $r=l$, we have
\begin{eqnarray*}
I_{\XX(l,l)} &=&  \bigcap_{
\footnotesize{
\begin{array}{c}
\tau \subseteq [l] \\
|\tau| = l-l+1 =1
\end{array}
}}
(L_{j_1},\ldots,L_{j_{l-l+1}}) =
(L_1) \cap (L_2) \cap \cdots \cap (L_l)
=(L_1\cdots L_l) \\
& = & (L_\sigma ~|~ \sigma \subseteq [l] ~~\mbox{and} ~~|\sigma| = l).
\end{eqnarray*}
So, statement $(iv)$ is true for all tuples of the form $(l,1)$
and $(l,l)$.

So, for the induction step, let $(l,r)$ be any tuple with
$1 < r < l$.  We then have
\footnotesize
\begin{eqnarray*}
I_{\XX(l,r)} &=&  \bigcap_{
\footnotesize{
\begin{array}{c}
\tau = \{j_1,\ldots,j_{l-r+1}\} \subseteq [l] \\
|\tau| = l-r+1
\end{array}
}}
(L_{j_1},\ldots,L_{j_{l-r+1}}) \\
&=&
 \bigcap_{
\footnotesize{
\begin{array}{c}
\tau = \{j_1,\ldots,j_{l-r+1}\} \subseteq [l] \\
|\tau| = l-r+1 ~~\mbox{and}~~ l \in \tau
\end{array}
}} (L_{j_1},\ldots,L_{j_{l-r+1}}) \cap
 \bigcap_{
\footnotesize{
\begin{array}{c}
\tau = \{j_1,\ldots,j_{l-r+1}\} \subseteq [l] \\
|\tau| = l-r+1  ~~\mbox{and}~~ l \not\in \tau
\end{array}
}}
(L_{j_1},\ldots,L_{j_{l-r+1}}) \\
& = &
 \bigcap_{
\footnotesize{
\begin{array}{c}
\tau = \{j_1,\ldots,j_{l-r-1+1}\} \subseteq [l-1] \\
|\tau| = (l-1)-r+1
\end{array}
}} (L_{j_1},\ldots,L_{j_{l-r-1+1}},L_l) \cap
 \bigcap_{
\footnotesize{
\begin{array}{c}
\tau = \{j_1,\ldots,j_{l-r+1}\} \subseteq [l-1] \\
|\tau| = (l-1)-(r-1)+1
\end{array}
}}
(L_{j_1},\ldots,L_{j_{l-r+1}}) \\
& = & (I_{\XX(l-1,r)},L_l) \cap I_{\XX(l-1,r-1)}.
\end{eqnarray*}
\normalsize
If we apply our induction hypothesis, we get
\begin{eqnarray*}
(I_{\XX(l-1,r)},L_l) \cap I_{\XX(l-1,r-1)} & = & ((L_{\sigma} ~|~ \sigma \subseteq [l-1] ~~\mbox{and}~~ |\sigma| = r),L_l) \cap (L_{\sigma} ~|~ \sigma \subseteq [l-1] ~~\mbox{and}~~ |\sigma| = r-1) \\
& \subseteq &
 (L_{\sigma} ~|~ \sigma \subseteq [l-1] ~~\mbox{and}~~ |\sigma| = r)
+ L_l(L_{\sigma} ~|~ \sigma \subseteq [l-1] ~~\mbox{and}~~ |\sigma| = r-1) \\
& = & (L_{\sigma} ~|~ \sigma \subseteq [l] ~~\mbox{and}~~ |\sigma| = r) = I.
\end{eqnarray*}
Since we have already
shown that $I \subseteq I_{\XX(l,r)}$ for all $(l,r)$, the
desired result now follows.
\end{proof}

\begin{rem} We can find an alternative proof of Lemma
\ref{productintersectionlemma}, $(iv)$, in \cite[Proposition 2.9]{GHM}.
\end{rem}

When $r = l-n+1$, Lemma
\ref{productintersectionlemma} implies that
$\XX(l,l-n+1) \subseteq \pr^n$ is a collection of $\binom{l}{n}$
points.  In this case, we can compute the corresponding Hilbert
function.

\begin{thm} \label{hfgeneric}
Let $\XX(l,l-n+1) \subset\PP^n$ be a star configuration.
Then $\mathbb{X}(l,l-n+1)$ has the Hilbert function of
${l\choose
n}$ generic points, that is,
\[HF(\mathbb{X}(l,l-n+1),t)= \dim_\mathbb{C} (S/I_{\XX(l,l-n+1)})_t =
\min\left\{{n+t\choose n},{l\choose n}\right\}.\] \noindent
Furthermore, the ideal $I_{\XX(l,l-n+1)}$ is generated by
${l\choose n-1}$ forms of degree $l-n+1.$
\end{thm}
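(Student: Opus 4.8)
The plan is to derive both assertions directly from Lemma~\ref{productintersectionlemma}, since essentially all of the substance is already contained there.

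First I would dispose of the ``furthermore'' clause. With $r = l-n+1$ one has $l-r+1 = n$, so Lemma~\ref{productintersectionlemma}$(iv)$ applies and yields
\[
I_{\XX(l,l-n+1)} = (\,L_\sigma \mid \sigma \subseteq [l],\ |\sigma| = l-n+1\,).
\]
Each $L_\sigma$ is a product of $l-n+1$ linear forms, hence a form of degree $l-n+1$, and there are $\binom{l}{l-n+1} = \binom{l}{n-1}$ subsets $\sigma \subseteq [l]$ of that size; this is precisely the second assertion. (The Hilbert function below will show, in passing, that these $\binom{l}{n-1}$ forms are even a minimal generating set.)

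For the Hilbert function I would argue as follows. Since the ideal is generated in degree $l-n+1$, it contains nothing nonzero in lower degree, so $HF(\XX(l,l-n+1),t) = \dim_\CC S_t = \binom{n+t}{n}$ for all $t \le l-n$. By Lemma~\ref{productintersectionlemma}$(iii)$ the configuration is $\binom{l}{n}$ distinct points, so its Hilbert function never exceeds $\binom{l}{n}$. The point is the numerical coincidence $\binom{n+(l-n)}{n} = \binom{l}{n}$: already at $t = l-n$ the Hilbert function attains the value $\binom{l}{n}$. Since the Hilbert function of a reduced finite point set is non-decreasing in $t$ (pass to the Artinian reduction by a general linear form, a non-zero-divisor on the one-dimensional Cohen--Macaulay ring $S/I_{\XX}$, so that the first difference is $\ge 0$), it remains equal to $\binom{l}{n}$ for every $t \ge l-n$. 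Putting the two ranges together and using that $t \mapsto \binom{n+t}{n}$ is strictly increasing and equals $\binom{l}{n}$ exactly when $t = l-n$ gives $HF(\XX(l,l-n+1),t) = \min\{\binom{n+t}{n},\binom{l}{n}\}$. (If one wishes to avoid invoking monotonicity: the restriction map $S_{l-n}\to\CC^{\binom{l}{n}}$ is an isomorphism, and multiplying by powers of a linear form that does not vanish on $\XX(l,l-n+1)$ then forces this map to be surjective in every degree $t \ge l-n$.)

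I do not expect a genuine obstacle here: once Lemma~\ref{productintersectionlemma} is available this is a short bookkeeping argument, the only things to check being the identities $\binom{l}{l-n+1}=\binom{l}{n-1}$ and $\binom{n+(l-n)}{n}=\binom{l}{n}$ together with the standard non-decreasing property of Hilbert functions of points. (An alternative, heavier, route avoiding Lemma~\ref{productintersectionlemma}$(iv)$ would be a double induction on $(l,n)$ via the residual sequence along the hyperplane $V(L_l)$, whose residual and section are the smaller star configurations $\XX(l-1,l-n)\subset\PP^n$ and $\XX(l-1,l-n+1)\subset\PP^{n-1}$ respectively.)
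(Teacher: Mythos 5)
Your argument for the Hilbert function is essentially the paper's: both use Lemma~\ref{productintersectionlemma}$(iv)$ to see that $I_{\XX(l,l-n+1)}$ vanishes below degree $l-n+1$, both use $(iii)$ to get the upper bound $\binom{l}{n}$, both invoke the numerical coincidence $\binom{(l-n)+n}{n}=\binom{l}{n}$, and both appeal to monotonicity of the Hilbert function of a finite point set to finish. You spell out the monotonicity via the Artinian reduction; the paper states it as a standard fact. No substantive difference there.

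The one place you genuinely diverge is the ``furthermore'' clause. The paper handles it by citing Geramita--Orecchia \cite{GO}, Proposition 4, which says that a point set with the Hilbert function of generic points has its ideal minimally generated in the expected way. You instead argue directly: Lemma~\ref{productintersectionlemma}$(iv)$ already hands you the $\binom{l}{l-n+1}=\binom{l}{n-1}$ explicit generators $L_\sigma$, and you observe (parenthetically) that the Hilbert function computation forces $\dim_\CC (I_{\XX})_{l-n+1}=\binom{l+1}{n}-\binom{l}{n}=\binom{l}{n-1}$, so those $\binom{l}{n-1}$ products are linearly independent and hence a genuine (indeed minimal) generating set. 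This is a slightly more self-contained route, and in this particular case it is arguably preferable because you already have the generators in hand, whereas the paper's citation is doing extra work to recover something Lemma~\ref{productintersectionlemma}$(iv)$ already gave you. It would be worth promoting that parenthetical to an actual sentence, since ``there are $\binom{l}{n-1}$ subsets $\sigma$'' alone only bounds the number of generators from above; the linear independence is what pins the count down.

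Both proofs are correct.
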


\begin{proof}
The Hilbert function of a finite set of points $\XX$ is a
non-decreasing sequence that stabilizes at $|\XX|$, so
$HF(\XX(l,l-n+1),t) \leq \binom{l}{n}$ for all $t$. From Lemma
\ref{productintersectionlemma} (iv), because $I_{\XX(l,l-n+1)}$ is
generated in degree $l-n+1$, then $(I_{\XX(l,l-n+1)})_ t = (0)$
for all $t < l - n+1$, whence $\dim_\mathbb{C}
(S/I_{\XX(l,l-n+1)})_t = \dim_\mathbb{C} S_t = \binom{t+n}{t}$.
The conclusion now follows from the fact that $\binom{l}{n} =
\binom{t+n}{n}$ when $t = l-n$.
The second statement follows from
\cite[Proposition 4]{GO} since
$\XX(l,l-n+1)$ has the Hilbert function of ${l\choose n}$ generic
points.
\end{proof}

We can use Theorem \ref{hfgeneric} to prove the following
result.

\begin{thm}\label{l-r+1>n}
Let $L_1,\ldots,L_l$ be $l$ general linear forms of
$S = \mathbb{C}[x_0,\ldots,x_n]$ and $0 < r \leq l$.
If $l-r+1 > n$, then
\[(L_\sigma ~|~  \sigma \subseteq [l] ~~\mbox{and}~ |\sigma| = r ) =
(S_r).\]
\end{thm}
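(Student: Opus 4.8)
I would prove Theorem~\ref{l-r+1>n} by establishing the two inclusions separately, and the whole difficulty lies in one of them. The inclusion $(L_\sigma \mid |\sigma| = r) \subseteq (S_r)$ is immediate, since every generator $L_\sigma$ is a homogeneous form of degree $r$. The content is the reverse inclusion $(S_r) \subseteq (L_\sigma \mid |\sigma| = r)$, and I would obtain it by induction on $r$, the statement being understood as quantified over all pairs $(n,l)$ with $l-r+1>n$.

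For the base case $r=1$, the hypothesis $l-r+1>n$ reads $l>n$, so among the $l$ general linear forms one finds $n+1$ that are linearly independent; these already span $S_1$, whence $(L_1,\dots,L_l)=(S_1)$. For the inductive step, assume the statement for $r-1$, suppose $l-r+1>n$ with $r\ge 2$, and write $I=(L_\sigma \mid \sigma\subseteq[l],\ |\sigma|=r)$. Fix $i\in[l]$. The forms $\{L_j : j\ne i\}$ are $l-1$ general linear forms, and since $l\ge n+r\ge n+2$ we have $(l-1)-(r-1)+1 = l-r+1>n$; so the induction hypothesis applies and gives $(L_\tau \mid \tau\subseteq[l]\setminus\{i\},\ |\tau|=r-1)=(S_{r-1})$. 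Multiplying each such generator by $L_i$ produces $L_i L_\tau = L_{\tau\cup\{i\}}$, which is one of the generators of $I$; hence $L_i\,(S_{r-1})\subseteq I$. Summing over $i\in[l]$ gives $(L_1,\dots,L_l)\,(S_{r-1})\subseteq I$, and since $(L_1,\dots,L_l)=(S_1)$ by the base case (recall $l>n$), I conclude $(S_r)=(S_1)(S_{r-1})\subseteq I$, which closes the induction.

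I do not expect a serious obstacle: the argument uses only the definition of general linear forms together with the elementary identity $(S_1)(S_{r-1})=(S_r)$ of ideals. The two points deserving a moment's care are that a sub-collection of general linear forms is again general (here one uses $l-1\ge n+1$, so we remain in the ``any $n+1$ are independent'' regime), and that the inequality $l-r+1>n$ is precisely what is preserved when passing from $(l,r)$ to $(l-1,r-1)$. An alternative, more geometric opening would be to observe --- by the same counting as in Lemma~\ref{productintersectionlemma}$(i)$ --- that no point of $\PP^n$ can lie on $l-r+1>n$ of the general hyperplanes $V(L_i)$, so that $V(L_\sigma \mid |\sigma|=r)=\emptyset$ and the ideal is primary to the irrelevant maximal ideal; but this by itself does not force equality with $(S_r)$ already in degree $r$, so the inductive route above seems the most efficient one. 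Note finally that this theorem yields part~$(3)$ of Theorem~\ref{mainresult} at once: when $l-r+1>n$ the ideal $(L_\sigma \mid |\sigma|=r)$ equals $(S_r)$, which contains every form of degree $d\ge r$.
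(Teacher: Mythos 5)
Your proof is correct, and it follows a genuinely different route from the one in the paper. The paper argues directly at the level of the degree-$r$ graded piece: it takes the first $n+r-1$ linear forms, invokes Theorem~\ref{hfgeneric} to see that the corresponding star-configuration ideal $I'=I_{\XX(n+r-1,r)}$ already contributes $\binom{n+r-1}{n-1}$ independent degree-$r$ generators, then adjoins the set $B=\{L_\tau L_{n+r}\mid |\tau|=r-1\}$ of $\binom{n+r-1}{n}$ further forms and proves $A\cup B$ is independent (hence spans all of $S_r$) by evaluating a putative dependence at a carefully chosen point of $\XX(n+r-1,r)$. Your argument instead proceeds by induction on $r$ and never touches Hilbert functions or point evaluations: the key is the ideal identity $(S_r)=(S_1)(S_{r-1})$ together with the observation that dropping one form $L_i$ preserves both generality and the inequality $l-r+1>n$, so the inductive hypothesis returns $(S_{r-1})$ and multiplying by $L_i$ stays inside $I$. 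What your route buys is self-containment and elementarity---it avoids the dependence on Theorem~\ref{hfgeneric} (and thus on \cite{GO}), and the only nontrivial input is that the $L_i$ span $S_1$ when $l>n$. What the paper's route buys is an explicit $\mathbb{C}$-basis of $I_r$ and a technique (evaluating at the points of a nearby star configuration) that is re-used in Sections~5 and~6 to control ranks of evaluation matrices, so its presence here is not accidental. Both proofs correctly reduce the statement to the graded piece in degree $r$: since $I$ is generated in degree $r$, showing $I_r=S_r$ (paper) and showing $(S_r)\subseteq I$ (you) are equivalent.
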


\begin{proof}  Set
$I = (L_\sigma ~|~  \sigma \subseteq [l] ~~\mbox{and}~ |\sigma| = r )$.
Because $\dim_\mathbb{C} S_r = \binom{r+n}{n}$,
the result will follow if we can find a set of $\binom{r+n}{n}$
linearly independent generators of $I$.

Let $L_1,\ldots,L_{n+r-1}$ be the first $n+r-1$ forms of
$L_1,\ldots,L_l$ (since $l > n+r-1$, there is at least one more form $L_{n+r}$).
If we set
\[I' =  (L_\sigma ~|~  \sigma \subseteq [n+r-1] ~~\mbox{and}~ |\sigma| = r ),\]
then $I'$ is the defining ideal of a star configuration
$\XX(n+r-1,r)$.  In particular, since $(n+r-1)-r+1 = n$,
$\XX(n+r-1,r)$ is a set of $\binom{n+r-1}{n}$ points.

By Theorem \ref{hfgeneric}, the ideal $I_{\XX(n+r-1,r)}$ is
generated by $\binom{n+r-1}{r} = \binom{n+r-1}{n-1}$ linearly independent
elements of degree $r$.
Let
\[A = \{ L_\sigma ~|~  \sigma \subseteq [n+r-1] ~~\mbox{and}~ |\sigma| = r \}\]
be these generators.
Now consider the set of generators of $I$ of the form:
\[B = \{L_{\tau}L_{n+r} ~|~ \tau \subseteq [n+r-1] ~~\mbox{and}~ |\tau| = r-1\}.\]
It follows that $|B| = \binom{n+r-1}{r-1}=\binom{n+r-1}{n}$. Then
$|A \cup B| = \binom{n+r-1}{n-1} + \binom{n+r-1}{n} =
\binom{n+r}{n}$.  So, we will be finished if we can show that the
elements of $A\cup B$ are linearly independent.

Suppose, for a contradiction, that there was some linear combination
\[\sum_{L_{\sigma} \in A} c_{\sigma}L_{\sigma} + \sum_{L_{\tau}L_{n+r} \in B} d_{\tau}
L_{\tau}L_{n+r} = 0\]
with $c_{\sigma},d_{\tau} \in \mathbb{C}$, not all zero.  There must be at least
one nonzero $d_{\tau}$ since all the elements of $A$ are linear
independent.  Rearranging the above equation gives:
\[\sum_{L_{\tau}L_{n+r} \in B} d_{\tau}
L_{\tau}L_{n+r} \in I' = I_{\XX(n+r-1,r)}.\]

Assume that $d_{\tau} \neq 0$.  If $\tau =
\{i_1,\ldots,i_{r-1}\}$, then $[n+r-1] \setminus \tau =
\{j_1,\ldots,j_n\}$.  Let $P$ be the point of $V(I') =
\XX(n+r-1,r)$ defined by $(L_{j_1},\ldots,L_{j_n})$. Because the
$L_i$s are general linear forms, the point $P$ does not vanish at
any of $L_{i_1},\ldots,L_{i_{r-1}},L_{n+r}$. On the other hand,
for any $\tau \neq \tau' \subseteq [n+r-1]$ with $|\tau'| = r-1$,
we must have $\tau' \cap \{j_1,\ldots,j_n\} \neq \emptyset$, and
thus $P$ vanishes at $L_{\tau'}L_{n+r}$.  We then have
\[ \left(\sum_{L_{\tau}L_{n+r} \in B} d_{\tau} L_{\tau}L_{n+r} \right)(P)
= d_{\tau}L_{\tau}(P)L_{n+r}(P) = 0.\] But since $L_{\tau}(P) \neq
0$ and $L_{n+r}(P) \neq 0$, we must have $d_{\tau} = 0$, a
contradiction.
\end{proof}

The above theorem will be key in proving Theorem
\ref{mainresult} $(3)$, i.e., when $l -r +1 >n$. When $l -r + 1
\leq n$, Question \ref{question} can be geometrically
reinterpreted:

\begin{problem}[Geometric Question]\label{geometryQ}
Let $l,r$, and $d$ be positive integers such that
$r \leq \min\{d,l\}$
and $l-r+1 \leq n$.
For a generic homogeneous
form $F \in S = \mathbb{C}[x_0,\ldots,x_n]$ of degree $d$,
is there a  star configuration $\XX(l,r)$ such that $\XX(l,r) \subseteq
V(F)$?
\end{problem}

We answer Question \ref{geometryQ} for
two trivial cases.

\begin{lem} \label{trivialcase1}
Let $l,r$, and $d$ be positive integers such that $r \leq l$
and $r \leq d$.  Furthermore, suppose that $l-r+1 =n$.
\begin{enumerate}
\item[$(i)$] If $l = n$ (and thus, $r=1$) then every generic
hypersurface of degree $d \geq 1$ contains a star configuration
$\XX(l,1)$. \item[$(ii)$] If $l = n+1$ (and thus $r=2$), then
every generic hypersurface of degree $d \geq 2$ contains a star
configuration $\XX(l,2)$.
\end{enumerate}
\end{lem}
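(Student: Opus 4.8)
The strategy is, in both cases, to recognize the relevant star configuration concretely as a very small and very flexible configuration of points and then to observe that a hypersurface of the stated degree automatically contains such a configuration; Lemma~\ref{productintersectionlemma} carries out all of the translation.

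For part $(i)$ I would proceed as follows. Since $l=n$ and $r=1$ we have $l-r+1=n$, so Lemma~\ref{productintersectionlemma}$(iv)$ gives $I_{\XX(n,1)}=(L_\sigma\mid|\sigma|=1)=(L_1,\ldots,L_n)$ for any $n$ linearly independent (equivalently, general, since $l=n<n+1$) linear forms, and this ideal defines a single point of $\PP^n$. Conversely every point $P\in\PP^n$ arises this way: take $L_1,\ldots,L_n$ to be a basis of the $n$-dimensional space $(I_P)_1$, so that $I_{\XX(n,1)}=(L_1,\ldots,L_n)=I_P$. Hence it is enough to exhibit one point on $V(F)$, and $V(F)$ is nonempty for every nonzero $F$ of positive degree; choosing $P\in V(F)$ gives $F\in I_P=I_{\XX(n,1)}$, i.e. $\XX(n,1)\subseteq V(F)$. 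In fact this works for every degree $d\ge 1$ form, not only the generic one.

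For part $(ii)$ the set-up is parallel but less immediate. Here $l=n+1$, $r=2$, so again $l-r+1=n$, and Lemma~\ref{productintersectionlemma} says $\XX(n+1,2)$ is a set of $\binom{n+1}{n}=n+1$ points with ideal $(L_iL_j\mid 1\le i<j\le n+1)$. Passing to coordinates in which $L_1=x_0,\ldots,L_{n+1}=x_n$ (legitimate, since $n+1$ general linear forms in $n+1$ variables form a basis of $S_1$), one checks that these $n+1$ points are exactly the coordinate points $e_0,\ldots,e_n$; conversely, if $q_0,\ldots,q_n$ are coordinate vectors of $n+1$ points $Q_0,\ldots,Q_n$ spanning $\PP^n$, then the basis of $S_1$ dual to $q_0,\ldots,q_n$ consists of $n+1$ linearly independent (hence general) linear forms whose star configuration $\XX(n+1,2)$ is exactly $\{Q_0,\ldots,Q_n\}$. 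Thus, by Lemma~\ref{productintersectionlemma}$(iv)$, some choice of general $L_i$ satisfies $\XX(n+1,2)\subseteq V(F)$ if and only if $V(F)$ contains $n+1$ points spanning $\PP^n$, and the problem reduces to producing such a spanning tuple of points on $V(F)$.

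This follows as soon as $V(F)$ is \emph{non-degenerate}, i.e. not contained in a hyperplane: pick any $Q_0\in V(F)$, and inductively, given $Q_0,\ldots,Q_k$ spanning a $k$-plane $\Lambda$ with $k<n$, choose $Q_{k+1}\in V(F)\setminus\Lambda$, possible precisely because $V(F)\not\subseteq\Lambda$; after $n$ steps $Q_0,\ldots,Q_n$ span $\PP^n$. Finally, for generic $F$ of degree $d\ge 2$ the hypersurface $V(F)$ is non-degenerate, since $V(F)\subseteq V(\ell)$ for a linear form $\ell$ forces $F$ to be a scalar multiple of $\ell^d$, and such forms sweep out a proper closed subvariety of the space of degree-$d$ forms when $d\ge 2$ (equivalently, the forms admitting a linear factor form a proper closed subset, being the image of the multiplication map $\PP(S_1)\times\PP(S_{d-1})\to\PP(S_d)$). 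I expect this last point — establishing non-degeneracy of the generic hypersurface, hence the existence of $n+1$ spanning points on it — to be the only step that is not routine bookkeeping; all the rest is a direct application of Lemma~\ref{productintersectionlemma} together with tracking the ``general linear forms'' hypothesis.
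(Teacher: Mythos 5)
Your proof is correct and takes the same approach as the paper, which simply observes that $\XX(n,1)$ is an arbitrary point and $\XX(n+1,2)$ is an arbitrary set of $n+1$ points in general linear position, and that a (generic) hypersurface contains such a configuration. You supply considerably more detail than the paper, especially in part $(ii)$ where the paper asserts without argument that a generic degree-$d\geq 2$ hypersurface contains $n+1$ spanning points, whereas you fill this in with the non-degeneracy argument.
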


\begin{proof}
$(i)$
Every hypersurface contains a point, which can be viewed as a $\XX(l,1)$.

$(ii)$ In this case $\XX(l,2)$ is $\binom{n+1}{n} = n+1$ points in general
linear position, and every generic hypersurface of degree $d \geq 2$ contains
such a configuration of points.
\end{proof}

We now pause and prove part of Theorem \ref{mainresult}.

\begin{proof}[{Proof of Theorem \ref{mainresult} (2), cases (i) to
(vii).}] As an opening remark, we can eliminate any tuple
$(n,l,r,d)$ that has $d < r$ or $d < l$. As mentioned in the
introduction, the statements are true for all tuples with $n=2$,
as proved in \cite{CVT};  we refer the reader to this paper for
these proofs.

We now consider the case that $n=1$, and consequently, $l-r+1 = 1$
implies that $l=r$.  Consider all the tuples of the form
$(1,l,l,d)$.  Since $l = r$, and we must have $d \geq l$, we can
omit all tuples with $d < l$.  So, it suffices to show that
Question \ref{question} has a positive answer with $n=1$ if and
only if $(n,r,l,d) = (1,l,l,d)$ with $d \geq l \geq 1$. So let us
first suppose there are general linear forms $L_1,\ldots,L_l$ such
that $F \in I = (L_\sigma ~|~ \sigma \subseteq [l] ~~\mbox{and}~~
|\sigma| = r= l ) = (L_1\cdots L_l)$.   Because $\deg F = d \geq r
=l$, we have that $(n,l,r,d) = (1,l,l,d)$ with $d \geq l =r \geq
1$. For the converse, suppose we are given a generic form $F$ of
degree $d$.  Because $F \in \mathbb{C}[x_0,x_1]$, we can factor
$F$ as $F = L_1L_2\cdots L_d$.  Because $F$ is generic, we can
assume that each $L_i$ has multiplicity one.  Since $d \geq l
\geq 1$ and because $l = r$, we can take our general linear forms
to be $L_1,\ldots,L_l$.  In this case $F \in I = (L_\sigma ~|~
\sigma \subseteq [l] ~~\mbox{and}~ |\sigma| = r ) = (L_1L_2 \cdots
L_r).$ Thus Question \ref{question} has a positive answer.

Furthermore, Lemma \ref{trivialcase1} implies that Question \ref{question}
has a positive answer if $(n,l,r,d) = (n,n,1,d)$ for all
$n \geq 3$ and $d \geq 1$ and if $(n,l,r,d) = (n,n+1,2,d)$ for
all $n \geq 3$ and $d \geq 2$.
\end{proof}

\begin{proof}[{Proof of Theorem \ref{mainresult}, (3)}] Suppose that
$(n,l,r,d) \in \mathbb{N}^4_+$ with $r \leq \min\{d,l\}$.  Suppose
that $l-r+1 >n$ and $F$ is any homogeneous form of $F$ of degree
$d$. Let $L_1,\ldots,L_l$ be any general linear forms.  Then by
Theorem \ref{l-r+1>n}, $I = (L_{\sigma} ~|~ \sigma \subseteq [l]
~\mbox{and}~~ |\sigma| = r) = (S_r)$. So, $F \in S_d \subseteq I$,
and thus, by Question \ref{algebraQ}, Question \ref{question} has
a positive answer.
\end{proof}

We continue with the proof of Theorem 1.2 at the end of the next
section.


\section{Non-existence answers}

We now give negative answers to Question \ref{question}
in a number of cases, allowing us to reduce
Question \ref{question} to one non-trivial case, which
will be studied in the remaining sections.

We first provide an asymptotic negative answer
to Question \ref{question} when $l-r+1<n$.

\begin{lem} \label{l-r+1<n}
If $l-r+1<n$ and $d\gg 0$, then the generic degree $d$
hypersurface does not contain a star configuration
$\mathbb{X}(l,r)$.
\end{lem}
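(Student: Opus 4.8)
The plan is to compare dimensions. A star configuration $\XX(l,r)$ with $l-r+1<n$ is a positive-dimensional variety of dimension $n-(l-r+1) \geq 1$ by Lemma \ref{productintersectionlemma}$(ii)$; in fact it is a union of $\binom{l}{l-r+1}$ linear subspaces, each of dimension $n-(l-r+1)$. Denote $e = l-r+1 < n$, so each component is a linear $\PP^{n-e}$. I would fix one irreducible component $\Lambda \cong \PP^{n-e}$ of $\XX(l,r)$, namely $\Lambda = V(L_{j_1},\ldots,L_{j_e})$ for some $e$-subset $\{j_1,\ldots,j_e\}\subseteq[l]$. If $\XX(l,r)\subseteq V(F)$, then in particular $F$ vanishes on $\Lambda$, i.e.\ $F$ lies in the ideal $(L_{j_1},\ldots,L_{j_e})$. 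Equivalently, restricting $F$ to the linear subspace $\Lambda$ gives the zero form: writing $R = S/(L_{j_1},\ldots,L_{j_e}) \cong \CC[y_0,\ldots,y_{n-e}]$, the image $\bar F \in R_d$ is $0$.

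The key point is a parameter count. The linear forms $L_1,\ldots,L_l$ vary in a family of dimension at most $l(n+1)$ (each $L_i$ has $n+1$ coefficients), and this already over-counts by ignoring scaling; in any case the space of star configurations $\XX(l,r)$ has dimension bounded by $l(n+1)$, a number independent of $d$. On the other hand, for a \emph{fixed} choice of linear forms, the condition ``$F$ vanishes on the component $\Lambda = V(L_{j_1},\ldots,L_{j_e})$'' forces $\bar F = 0$ in $R_d$, and $\dim_{\CC} R_d = \binom{n-e+d}{n-e}$. Thus, for fixed linear forms, the forms $F$ of degree $d$ vanishing on $\Lambda$ form a linear subspace of $S_d$ of codimension exactly $\binom{n-e+d}{n-e}$ (the map $S_d \to R_d$ is surjective). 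Letting the linear forms vary, the locus of degree-$d$ forms that vanish on \emph{some} component of \emph{some} star configuration $\XX(l,r)$ is the image of an incidence variety, hence has dimension at most
\[
l(n+1) + \left[\dim S_d - \binom{n-e+d}{n-e}\right] = \dim S_d - \binom{n-e+d}{n-e} + l(n+1).
\]
Since $n-e \geq 1$, the quantity $\binom{n-e+d}{n-e}$ grows in $d$ (it is a nonconstant polynomial in $d$ of degree $n-e \geq 1$), so for $d \gg 0$ we have $\binom{n-e+d}{n-e} > l(n+1)$, and the displayed dimension is strictly less than $\dim S_d$. Hence the generic degree-$d$ form does not vanish on any component of any $\XX(l,r)$, and a fortiori does not contain $\XX(l,r)$.

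The step I expect to require the most care is making the incidence-variety dimension bound rigorous: one should set up $W = \{(F, \XX(l,r)) : F \text{ vanishes on some component of } \XX(l,r)\}$ inside $\PP(S_d) \times (\text{parameter space of configurations})$, check that the second projection has fibers that are linear subspaces of the asserted codimension (using surjectivity of $S_d \to R_d$, which holds because the $L_{j_i}$ are linearly independent), and then conclude via the fiber-dimension theorem that $\dim W \leq l(n+1) + \dim S_d - \binom{n-e+d}{n-e}$, so the first projection cannot be dominant once this is below $\dim \PP(S_d)$. Everything else is a routine comparison of a polynomial in $d$ of degree $\geq 1$ against a constant.
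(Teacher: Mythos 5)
Your proof is correct and uses the same fundamental machinery as the paper's: an incidence correspondence between degree-$d$ forms and (parts of) star configurations, followed by a fiber-dimension count showing that the projection to the space of forms cannot be dominant once $d$ is large. Where you diverge is in the fiber computation. The paper requires $F$ to vanish on the \emph{entire} configuration $\XX(l,r)$, parametrizes configurations by $\mathcal{H}\subset(\check{\PP^n})^l$ (so $\dim\mathcal{H}=ln$), and then must control $\dim_{\CC}(I_{\XX(l,r)})_d$, for which it invokes the fact (from Lemma~\ref{productintersectionlemma}$(ii)$) that $HF(\XX(l,r),d)$ is eventually a positive polynomial in $d$ of degree $n-(l-r+1)$; the relevant inequality becomes $\dim\Sigma_{d,l,r}-\dim\PP S_d\leq \dim\mathcal{H}-HF(\XX(l,r),d)$. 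You instead weaken the containment condition to vanishing on a \emph{single} linear component $\Lambda\cong\PP^{n-e}$ with $e=l-r+1$, which makes the fiber codimension exactly $\binom{n-e+d}{n-e}$ with no need to know the Hilbert function of the whole configuration: surjectivity of $S_d\to R_d$ is automatic because $\Lambda$ is a linear subspace. Both arguments end by comparing a polynomial in $d$ of degree $n-e\geq 1$ against a $d$-independent constant ($ln$ in the paper, $l(n+1)$ in yours), so both give the asymptotic conclusion; your version trades a marginally looser constant (and hence a marginally larger implicit threshold on $d$) for a cleaner fiber computation that bypasses the Hilbert-function input. One small point to tidy: once you pass to $\PP(S_d)$, the comparison should be against $\dim\PP(S_d)=\dim S_d-1$ rather than $\dim S_d$, but this off-by-one has no effect on the $d\gg 0$ conclusion.
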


\begin{proof}
Let $\PP S_d$ be the parameter space for degree $d$ hypersurfaces
in $\PP^n$. Also, let $\mathcal{H}\subset(\check{\PP^n})^l$ be the
parameter space for star configurations $\mathbb{X}(l,r)$ in
$\PP^n$.
Consider the incidence correspondence
\[\Sigma_{d,l,r}=\left\{(H,\mathbb{X}(l,r)): \mathbb{X}(l,r)\subset H\right\}
\subset\PP S_d\times\mathcal{H}\] and the natural projection maps
\[\psi_{d,l,r}:\Sigma_{d,l,r}\longrightarrow \mathcal{H}
~~\text{and}~~~ \phi_{d,l,r}:\Sigma_{d,l,r}\longrightarrow \PP
S_d.\] We have that $\phi_{d,l,r}$ is dominant if and only if
Question \ref{question} has an affirmative answer.

Using a standard fibre dimension argument,  if $d\geq l-n+1$, then we get
\[\dim\Sigma_{d,l,r}\leq \dim \mathcal{H} +
\dim_{\mathbb{C}} (I_{\mathbb{X}(l,r)})_d -1 =
\dim\mathcal{H}+{n+d\choose d}-{l\choose n} -1.\]
Hence we have that
\[\dim\Sigma_{d,l,r}-\dim\PP
S_d \leq \dim \mathcal{H} + \dim_{\mathbb{C}}
(I_{\mathbb{X}(l,r)})_d -{d+1 \choose n}=
\dim\mathcal{H}-HF(\mathbb{X}(l,r),d).\] Now $\dim\mathcal{H}=ln$
and $HF(\mathbb{X}(l,r),d)$ is an eventually positive polynomial
in $d$ of degree $n-(l-r+1)$ by Lemma
\ref{productintersectionlemma} $(ii)$. Thus, for $d \gg 0$, the
map $\phi_{d,l,r}$ cannot be dominant.
\end{proof}

We now restrict to the case that $l-r+1 = n$. In light of Question
\ref{geometryQ}, we are asking if the generic hypersurface
contains a star configuration $\XX(l,r)$.  Because $l -r +1 = n$,
$l$ determines $r$, so we will simplify our notation slightly and
write $\XX(l) \subseteq \PP^n$ for $\XX(l,r)$. We can now
eliminate ``large'' values of $d$ when $l-r+1 = n$.

\begin{thm}\label{generalcase} If $n>2$ and $l>n+2$,
then the generic degree $d$ hypersurface in $\PP^n$ does
not contain a star configuration $\mathbb{X}(l)$ for any $d$.
If $n=2$, then the generic degree $d$ plane curve does not contain a
star configuration  $\mathbb{X}(l)$ for $l>5$ and any $d$.
\end{thm}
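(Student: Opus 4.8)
The plan is to rerun the incidence–variety argument from the proof of Lemma~\ref{l-r+1<n}, now exploiting the fact that when $l-r+1=n$ the Hilbert function of every star configuration $\mathbb{X}(l)=\mathbb{X}(l,l-n+1)$ is known exactly: by Lemma~\ref{productintersectionlemma}$(iii)$ and Theorem~\ref{hfgeneric}, $\mathbb{X}(l)$ is a set of $\binom{l}{n}$ points with $HF(\mathbb{X}(l),d)=\min\{\binom{n+d}{n},\binom{l}{n}\}$, \emph{independently} of which general linear forms were used. First I would dispose of the range $d<r=l-n+1$: there $\binom{n+d}{n}\le\binom{l}{n}$, so $(I_{\mathbb{X}(l)})_d=0$ and no nonzero degree $d$ form vanishes on any $\mathbb{X}(l)$; in particular the generic one does not (such tuples are in any case excluded by the standing requirement $r\le d$).

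Now fix $d\ge r$. I reuse the incidence correspondence $\Sigma_{d,l,r}\subseteq\PP S_d\times\mathcal{H}$, the projections $\psi_{d,l,r},\phi_{d,l,r}$, and the equality $\dim\mathcal{H}=ln$ from the proof of Lemma~\ref{l-r+1<n}. Since $d\ge l-n+1$ forces $n+d>l$, hence $\binom{n+d}{n}>\binom{l}{n}$, every fibre of $\psi_{d,l,r}$ is a nonempty projective space $\PP\big((I_{\mathbb{X}(l)})_d\big)$, and by Theorem~\ref{hfgeneric} it has the \emph{same} dimension $\binom{n+d}{n}-\binom{l}{n}-1$ for every star configuration. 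The fibre dimension theorem then gives
\[
\dim\Sigma_{d,l,r}\le\dim\mathcal{H}+\binom{n+d}{n}-\binom{l}{n}-1=ln+\binom{n+d}{n}-\binom{l}{n}-1.
\]
Because $\dim\PP S_d=\binom{n+d}{n}-1$, the map $\phi_{d,l,r}$ can be dominant only if $\dim\Sigma_{d,l,r}\ge\dim\PP S_d$, that is, only if $\binom{l}{n}\le ln$. So the whole theorem reduces to the combinatorial inequality $\binom{l}{n}>ln$.

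For $n=2$ this is elementary: $\binom{l}{2}=\frac{l(l-1)}{2}>2l\iff l>5$. For $n\ge 3$ with $l>n+2$ we have $3\le n\le l-3$, so unimodality of the binomial coefficients gives $\binom{l}{n}\ge\binom{l}{3}=\frac{l(l-1)(l-2)}{6}$, while $ln\le l(l-3)$; and $\frac{(l-1)(l-2)}{6}>l-3$ is equivalent to $(l-4)(l-5)>0$, which holds since $l\ge n+3\ge 6$. In every case $\binom{l}{n}>ln$, so $\phi_{d,l,r}$ is not dominant for any $d$, which proves the theorem. The one point I expect to need care is the reduction to this inequality: it relies on the assertion that \emph{every} star configuration $\mathbb{X}(l)$ — not merely a generic collection of $\binom{l}{n}$ points — imposes independent conditions on degree $d$ forms, i.e.\ on the full strength of Theorem~\ref{hfgeneric}. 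Without it a special star configuration could impose fewer conditions, enlarge $\dim\Sigma_{d,l,r}$, and break the estimate in exactly the direction that matters for a non-existence statement.
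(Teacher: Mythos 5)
Your proposal is correct and follows the same high-level strategy as the paper: set up the incidence correspondence $\Sigma_{d,l}\subset\PP S_d\times\mathcal{H}$, use Theorem~\ref{hfgeneric} to get a uniform fibre dimension for the projection to $\mathcal{H}$, and thereby reduce the non-dominance of $\phi_{d,l}$ to the single combinatorial inequality $\binom{l}{n}>ln$. The worry you flag at the end --- that the fibre dimension bound needs Theorem~\ref{hfgeneric} to hold for \emph{every} star configuration parameterized by $\mathcal{H}$, not just a generic point set --- is exactly the right thing to be careful about, and the paper handles it the same way. Where you genuinely diverge is in the endgame: the paper closes $\binom{l}{n}>ln$ with a somewhat opaque chain of inequalities split into the cases $l=n+3$ and $l>n+3$ (and cites \cite[Theorem 3.1]{CVT} verbatim for $n=2$), whereas you use unimodality of binomial coefficients ($3\le n\le l-3$ gives $\binom{l}{n}\ge\binom{l}{3}$) plus $ln\le l(l-3)$ to reduce everything to $(l-4)(l-5)>0$, which is immediate for $l\ge 6$, and you also give the one-line $n=2$ computation directly. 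Your version is cleaner, avoids the case split, and makes the $n=2$ statement self-contained rather than citing the earlier paper; the paper's version buys nothing over yours except matching the estimate in Lemma~\ref{l-r+1<n} more literally.
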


\begin{proof}  The case $n=2$ is \cite[Theorem 3.1]{CVT},
so we only consider the case $n>2$. We use the notation of Lemma
\ref{l-r+1<n} dropping the unnecessary subindex $r$.
Using a standard fibre dimension argument,  if $d\geq l-n+1$, then
\[\dim\Sigma_{d,l}\leq \dim \mathcal{H} +
\dim_{\mathbb{C}} (I_{\mathbb{X}(l)})_d -1 =
\dim\mathcal{H}+{n+d\choose d}-{l\choose n} -1.\]
Note that we use Theorem \ref{hfgeneric} to compute
$\dim_{\mathbb{C}} (I_{\mathbb{X}(l)})_d$.  Thus, the answer
to our question is affirmative only if
$\dim\Sigma_{d,l}\geq\dim\PP S_d$, that is, only if
\begin{equation}\label{bound}
ln-{l\choose n}\geq 0.
\end{equation}
We show that \eqref{bound} does not hold if $l\geq n+3$. If
$l=n+3$, then \eqref{bound} yields
\[n(n+3)-\binom{n+3}{n}=-(n+3)\frac{n^2-3n+2}{6}\geq 0\, ,
\]
and this does not hold for $n>2$.  So suppose that $l > n + 3$. We then have
\begin{eqnarray}
ln-{l\choose n}&=&ln - \frac{l(l-1)\cdots(l-n+1)}{n!}\nonumber \\
&\geq& \frac{n+3}{n!}[n(n!)+(1-l)(l-2)(l-3)\cdots(l-n+1)] \nonumber\\
&\geq&\frac{n+3}{n!}[n(n!)+(1-l)(n+1)n\cdots4]\nonumber\\
&=&\frac{n+3}{n!}\left[n(n!)+(1-l)\frac{(n+1)!}{6}\right]\nonumber\\
&=&\frac{n+3}{n!}\left[n(n!)+(1-l)\frac{(n+1)(n!)}{6}\right]
={(n+3)}\left[n+(1-l)\frac{(n+1)}{6}\right]\geq 0 \label{bound2}.\end{eqnarray}
But \eqref{bound2} is  true only if $ \frac{7n+1}{n+1}\geq l> n+3$ and
this is a contradiction for $n>2$.
\end{proof}

We use the results of this section to continue our proof of
Theorem \ref{mainresult}.

\begin{proof}[{Proof of Theorem \ref{mainresult}, (1).}]
From Lemma \ref{l-r+1<n},
it follows that if $l-r+1 < n$ and $d\gg 0$, then the generic
degree $d$ form in $n+1$ variables cannot be written in the form
\eqref{expression}.
\end{proof}

\begin{rem} We can now assume $l-r+1 =n$.
We have already dealt with the case that $n=1$ or $n=2$.
On the other hand, if $n \geq 3$, we can eliminate tuples $(n,l,r,d)$
with $l \geq n+3$ by Lemma \ref{generalcase}.  So, we are only
left with the tuples of the form $(n,n,1,d), (n,n+1,2,d)$, and $(n,n+2,3,d)$
with $d \geq r$.  But we have already taken care of the tuples
of the form $(n,n,1,d)$ and $(n,n+1,2,d)$, so it suffices
to determine for which $d$ the Question \ref{question} has a positive
answer for $(n,n+2,3,d)$.  The remaining sections deal with this case.
\end{rem}


\section{Interlude: reformulating our question}

To complete our proof of Theorem \ref{mainresult}, it suffices to
determine which tuples of the form $(n,n+2,3,d)$ with $d \geq 3$
satisfy Question \ref{question}.  In the language of star
configurations, we wish to know which degree $d$ generic
hypersurfaces in $\pr^n$ contains a star configuration $\XX(n+2) =
\XX(n+2,3)$. We make a brief interlude to derive some technical
results, moving Question \ref{question} back and forth between
questions in algebra and questions in geometry.

We first notice the following trivial fact:

\begin{lem}\label{fsummililem} Let $\{F=0\}$ be an equation of the
degree $d$ hypersurface ${Y}\subset\PP^n$. Then $Y$ contains
a star configuration $\mathbb{X}(n+2)$ only if
there are  $L_1,\ldots,L_l,$ with $l = n+2,$ general linear forms such that
\[F=
\sum_{
\footnotesize{
\begin{array}{c}
\sigma = \{i_1,i_2,i_3\} \subseteq [n+2]\\
\end{array}}}
L_\sigma M_\sigma
\]
where the $\binom{n+2}{3}$ forms $M_{\sigma}$ have degree $d-3$.
\end{lem}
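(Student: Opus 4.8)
The plan is to unwind the phrase ``$Y$ contains a star configuration $\XX(n+2)$'' into an ideal-membership statement and then quote Lemma \ref{productintersectionlemma}$(iv)$. Writing $\XX(n+2) = \XX(n+2,3)$, I note first that $l - r + 1 = (n+2) - 3 + 1 = n$, so by Lemma \ref{productintersectionlemma}$(iii)$ this star configuration is a reduced set of $\binom{n+2}{n}$ points, and its defining ideal $J = \bigcap_{|\tau| = n}(L_{j_1},\ldots,L_{j_n})$, being an intersection of primes, is radical and saturated; hence $J = I_{\XX(n+2,3)}$ is exactly the ideal of all forms vanishing on the underlying point set.

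First I would suppose that $Y = V(F)$ contains such a configuration: by definition this produces general linear forms $L_1,\ldots,L_{n+2} \in S$ for which $\XX(n+2,3) \subseteq V(F)$. Then $F$ vanishes on every point of $\XX(n+2,3)$, so $F \in I_{\XX(n+2,3)}$. Since $l - r + 1 = n$ satisfies the hypothesis $l-r+1 \le n$ of Lemma \ref{productintersectionlemma}$(iv)$, I can invoke it to get
\[
I_{\XX(n+2,3)} = (L_\sigma ~|~ \sigma \subseteq [n+2] ~~\mbox{and}~~ |\sigma| = 3),
\]
a homogeneous ideal with $\binom{n+2}{3}$ generators of degree $3$. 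Hence $F = \sum_{\sigma} L_\sigma G_\sigma$ for some $G_\sigma \in S$, the sum ranging over the $\binom{n+2}{3}$ three-element subsets $\sigma \subseteq [n+2]$. Because $F$ is homogeneous of degree $d$ while $\deg L_\sigma = 3$, I may replace each $G_\sigma$ by its homogeneous component $M_\sigma$ of degree $d-3$ without affecting the equality (all other graded pieces cancel; when $F = 0$ one simply takes all $M_\sigma = 0$). This yields precisely the asserted presentation $F = \sum_\sigma L_\sigma M_\sigma$ with $\deg M_\sigma = d-3$.

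There is no real obstacle here — the paper itself flags this as a ``trivial fact'' — and essentially all of the content is carried by Lemma \ref{productintersectionlemma}$(iv)$. The only place I would be slightly careful is the implication ``$F$ vanishes on $\XX(n+2,3)$'' $\Rightarrow$ ``$F \in I_{\XX(n+2,3)}$'': this uses that $I_{\XX(n+2,3)}$ is the full radical (hence saturated) ideal of the point scheme, which is guaranteed by the very definition of $\XX(l,r)$ as an intersection of linear (prime) ideals. With that observed, the lemma follows immediately.
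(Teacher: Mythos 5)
Your proof is correct and is exactly the unfolding the paper has in mind: the paper states this lemma without proof as a ``trivial fact,'' and your argument fills it in using precisely Lemma~\ref{productintersectionlemma}$(iv)$ (that $I_{\XX(n+2,3)}$ is generated by the degree-$3$ products $L_\sigma$) followed by extraction of homogeneous components. The only cosmetic remark is that the parenthetical ``when $F=0$'' is not really needed, since $F$ defines a hypersurface and hence is nonzero; the case $d<3$ is instead handled vacuously, because then $(I_{\XX(n+2,3)})_d = 0$ and no nonzero $F$ of that degree can contain the configuration.
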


Hence, it is natural to perform the following geometric
construction. We define a map
\[\Phi_{d,l}:\underbrace{ S_1\times\cdots\times
S_1}_{l = n+2}\times\underbrace{ S_{d-3}\times\cdots\times
S_{d-3}}_{\binom{n+2}{3}}\longrightarrow S_d\]
of affine varieties
such that
\[\Phi_{d,l}\left(L_1,\ldots,L_l,M_{\{1,2,3\}},\ldots,M_\sigma,\ldots,
\ldots M_{\{n,n+1,n+2\}}\right)= \sum_{\sigma \subseteq [n+2],~~
|\sigma|=3} ^{} L_{\sigma}M_{\sigma}.\] We then rephrase our
question in terms of the map $\Phi_{d,l}$:

\begin{lem}\label{dominantmaplemma} Let $d,l=n+2$ be non-negative integers
with $d \geq l-1$. Then the following are equivalent:
\begin{itemize}
\item[$(i)$] Question \ref{question} has an affirmative answer
for $(n,l,r,d) = (n,n+2,3,d)$.
\item[$(ii)$] the map $\Phi_{d,l}$ is a dominant
map.
\end{itemize}
\end{lem}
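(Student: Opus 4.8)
The plan is to prove the equivalence of $(i)$ and $(ii)$ by carefully unwinding the definitions, using the genericity statements already established. The key observation is that ``Question \ref{question} has an affirmative answer for $(n,n+2,3,d)$'' means, by Question \ref{algebraQ}, that a \emph{generic} $F \in S_d$ admits a presentation $F = \sum_{|\sigma|=3} L_\sigma M_\sigma$ with the $L_i$ \emph{general} linear forms, which by Lemma \ref{fsummililem} is precisely the statement that $F$ lies in the image of $\Phi_{d,l}$ for some choice of general linear forms; so the real content is translating ``generic $F$ is in the image'' into ``$\Phi_{d,l}$ is dominant.''

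First I would prove $(ii) \Rightarrow (i)$. If $\Phi_{d,l}$ is dominant, its image contains a nonempty Zariski-open subset $U$ of $S_d$. A generic $F$ (i.e.\ $F$ in some nonempty open set) then lies in $U$, so $F = \Phi_{d,l}(L_1,\ldots,L_l,M_{\{1,2,3\}},\ldots)$ for some linear forms $L_i$ and some forms $M_\sigma$ of degree $d-3$. The only subtlety is that we need the $L_i$ obtained this way to be \emph{general} linear forms, not merely arbitrary ones. This is handled by a standard argument: the locus in the source affine space where the $L_i$ fail to be general (some $n+1$ of them become linearly dependent) is a proper closed subset, and since $\Phi_{d,l}$ is dominant, it cannot map a nonempty open set into the image of that proper closed subset unless that image is not dense --- more precisely, the restriction of $\Phi_{d,l}$ to the open dense locus where the $L_i$ are general is still dominant (a dominant morphism restricted to a dense open subset of an irreducible source stays dominant). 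Hence a generic $F$ is hit by a tuple with general $L_i$, giving the presentation required by Question \ref{question}.

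Next I would prove $(i) \Rightarrow (ii)$. If Question \ref{question} has an affirmative answer, then by Lemma \ref{fsummililem} every $F$ in some nonempty open subset $W \subseteq S_d$ can be written as $\sum_{|\sigma|=3} L_\sigma M_\sigma$ with general (in particular, valid) $L_i$; hence $W \subseteq \operatorname{im}(\Phi_{d,l})$. Since the source of $\Phi_{d,l}$ is an irreducible affine variety and $\Phi_{d,l}$ is a morphism, its image is a constructible set; a constructible set containing a nonempty Zariski-open subset of the irreducible target $S_d$ is dense, so $\Phi_{d,l}$ is dominant. The hypothesis $d \geq l-1 = n+1$ is exactly what guarantees $d - 3 \geq n - 2 \geq 0$ when $n \geq 2$ (and more relevantly $d \geq 3$, so that the $M_\sigma$ have nonnegative degree and the whole setup makes sense); it also matches the running hypothesis $r \leq d$ with $r = 3$ in the cases of interest, so no tuples are lost.

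I do not expect a serious obstacle here --- this is a formal ``translation'' lemma and the proof is essentially bookkeeping with the definitions of dominance and genericity. The one point requiring a moment's care is the passage between ``general linear forms $L_i$'' (an open condition on the $L_i$) and the domain of $\Phi_{d,l}$ (which is all of $S_1^{\,l} \times S_{d-3}^{\binom{n+2}{3}}$, with no generality imposed): one must check that restricting to the open locus of general $L_i$ does not destroy dominance, which follows because that locus is dense in the irreducible source. I would state this restriction-of-dominant-morphisms fact explicitly and cite it as the only nontrivial ingredient.
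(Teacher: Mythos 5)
Your proof is correct and takes essentially the same route as the paper: the content of $(ii)\Rightarrow(i)$ is that the locus of non-general tuples is a proper closed subset of the irreducible affine domain of $\Phi_{d,l}$, so dominance survives restriction to its complement, while $(i)\Rightarrow(ii)$ is the immediate consequence of Lemma \ref{fsummililem}. If anything you are slightly more careful than the paper, which writes down only the sub-locus $\Delta$ where some \emph{three} of the $L_i$ are dependent rather than the full non-general locus where some $n+1$ are dependent; the contradiction argument applies verbatim to either, but yours is the correct set to exclude when $n>2$.
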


\begin{proof}
Lemma \ref{fsummililem} proves that $(i)$ implies $(ii)$. To prove
the other direction, it is enough to show that for a generic form
$F$, the fibre $\Phi_{d,l}^{-1}(F)$ contains  a set of $l$ linear
forms defining a star configuration. More precisely, define
$\Delta\subset  S_1\times\cdots\times S_1\times
S_{d-3}\times\cdots\times S_{d-3}$ as follows:
\[\Delta = \left\{\left(L_1,\ldots,L_l,\ldots,M_\sigma,\ldots\right)~\left|
\begin{tabular}{c}
\mbox{there exists $\sigma = \{a,b,c\} \subseteq [n+2]$ such that} \\
\mbox{$L_a,L_b,L_c$ are linearly dependent}
\end{tabular} \right\}\right. .\]
Then we want to show that $\Phi_{d,l}^{-1}(F)\not\subset\Delta$.

We proceed by contradiction, assuming that the generic fibre of
$\Phi_{d,l}$ is contained in $\Delta$.  Then $\Delta$ would be a
component of the domain of $\Phi_{d,l}$. This is a contradiction as
the latter is an irreducible variety being the product of
irreducible varieties.
\end{proof}

Using the map $\Phi_{d,l}$ we can now translate Question
\ref{question} into an ideal theoretic question.

\begin{lem}\label{tgspaceideallem}
 Let $d,l=n+2$ be non-negative integers
with $d \geq l-1$.
Consider $l$ generic forms
$L_1,\ldots,L_l\in S = \mathbb{C}[x_0,\ldots,x_n]$ and
$\binom{n+2}{3}$ forms $\{M_\sigma \in S_{d-3} ~|~ \sigma \subseteq [n+2]~~\text{and}~~|\sigma|=3\}$.

Define the following $l$ forms of degree $d-1$:
\begin{eqnarray*}
Q_1 &= & \sum_{
\footnotesize{
\begin{array}{c}
\sigma \subseteq [n+2],~~1 \in \sigma
\end{array}}}
\frac{L_\sigma M_\sigma}{L_1}
=
\sum_{
\footnotesize{
\begin{array}{c}
\{a,b\} \subseteq [n+2] \setminus \{1\}
\end{array}}}
L_{a}L_{b}M_{\{1\} \cup \{a,b\}},
\\
Q_2 &= & \sum_{
\footnotesize{
\begin{array}{c}
\sigma \subseteq [n+2],~~2 \in \sigma
\end{array}}}
\frac{L_\sigma M_\sigma}{L_2}, \\
& \vdots& \\
Q_l &= & \sum_{
\footnotesize{
\begin{array}{c}
\sigma \subseteq [n+2],~~l \in \sigma
\end{array}}}
\frac{L_\sigma M_\sigma}{L_l}.
\end{eqnarray*}
With this notation, form the ideal
\[I= (L_{\sigma} ~|~ \sigma \subseteq [l]~~\mbox{and}~~ |\sigma|=3) +
(Q_1,\ldots,Q_l) \subseteq S.\]
Then the following are equivalent:
\begin{itemize}
\item[$(i)$] Question \ref{question} has an affirmative answer
for $(n,l,r,d) = (n,n+2,3,d)$;
\item[$(ii)$] $I_d=S_d$.
\end{itemize}
\end{lem}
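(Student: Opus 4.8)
The statement to be proved is Lemma~\ref{tgspaceideallem}: for $l = n+2$ and $d \ge l-1$, Question~\ref{question} has an affirmative answer for the tuple $(n,n+2,3,d)$ if and only if, for generic linear forms $L_1,\dots,L_l$ and generic forms $M_\sigma \in S_{d-3}$, the ideal $I = (L_\sigma \mid |\sigma|=3) + (Q_1,\dots,Q_l)$ satisfies $I_d = S_d$. My strategy is to tie this back to the map $\Phi_{d,l}$ of Lemma~\ref{dominantmaplemma}: the condition $I_d = S_d$ is exactly the statement that the differential of $\Phi_{d,l}$ at a generic point of its domain is surjective, and surjectivity of the differential at a smooth point of an irreducible domain is equivalent to dominance. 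So the proof naturally splits into (a) computing the image of $d\Phi_{d,l}$ at a point and recognizing it as $I_d$, and (b) invoking generic smoothness / the "dominant iff differential surjective somewhere" principle in characteristic $0$.

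\textbf{Step 1: Compute the tangent map.}
Fix a point $p = (L_1,\dots,L_l,\dots,M_\sigma,\dots)$ in the domain of $\Phi_{d,l}$. The tangent space to the domain at $p$ decomposes as $\bigoplus_{i=1}^l S_1 \oplus \bigoplus_{|\sigma|=3} S_{d-3}$. A tangent vector $(\dot L_1,\dots,\dot L_l,\dots,\dot M_\sigma,\dots)$ is pushed forward by the product rule to
\[
d\Phi_{d,l}\big(\dot L_\bullet, \dot M_\bullet\big)
= \sum_{|\sigma|=3} L_\sigma \dot M_\sigma
\;+\; \sum_{i=1}^l \dot L_i \!\!\sum_{\substack{\sigma \subseteq [l] \\ i \in \sigma}} \frac{L_\sigma M_\sigma}{L_i}
= \sum_{|\sigma|=3} L_\sigma \dot M_\sigma + \sum_{i=1}^l \dot L_i\, Q_i .
\]
As $\dot M_\sigma$ ranges over all of $S_{d-3}$, the first sum ranges over $\big((L_\sigma \mid |\sigma|=3)\big)_d$; as $\dot L_i$ ranges over $S_1$, the second sum contributes $\big((Q_1,\dots,Q_l)\big)_d$. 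Hence the image of $d\Phi_{d,l}$ at $p$ is precisely $I_d$, where $I$ is the ideal built from the $L_i, M_\sigma$ coordinates of $p$. This is the computational heart of the lemma, but it is a one-line application of the Leibniz rule once the bookkeeping is set up.

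\textbf{Step 2: Pass from the differential to dominance.}
The domain of $\Phi_{d,l}$ is an affine space, hence smooth and irreducible, and we are in characteristic $0$, so $\Phi_{d,l}$ is dominant if and only if its differential is surjective at the generic point, equivalently at \emph{some} point, equivalently (by upper semicontinuity of the rank of $d\Phi$) on a dense open subset. By Step~1, $d\Phi_{d,l}$ is surjective at $p$ exactly when $I_d = S_d$ for the ideal associated to $p$. So "$\Phi_{d,l}$ is dominant" is equivalent to "$I_d = S_d$ for generic $(L_\bullet, M_\bullet)$", which is statement~$(ii)$. Combining with the equivalence of $(i)$ and "$\Phi_{d,l}$ dominant" from Lemma~\ref{dominantmaplemma} gives $(i) \Leftrightarrow (ii)$ and finishes the proof.

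\textbf{The main obstacle.}
The only genuinely delicate point is the "generic" quantifier matching up on both sides: I must make sure that the locus where $d\Phi_{d,l}$ has maximal rank is the same dense open set on which the $L_i$ can be taken to be general linear forms (so that the $L_\sigma$ and $Q_i$ generate the ideal described in the statement), and that forms $M_\sigma$ can be chosen generically there too. Since "being general linear forms" and "$d\Phi_{d,l}$ surjective" are each either empty or dense open conditions on an irreducible variety, their intersection is dense open if nonempty, and the equivalence with~$(i)$ forces it to behave correctly. A secondary, purely cosmetic issue is that $(Q_1,\dots,Q_l)$ as written in the statement uses a specific generic choice of $L_i, M_\sigma$, so I should phrase Step~1 as "for the ideal $I$ determined by the coordinates of $p$" and only specialize to generic $p$ at the end.
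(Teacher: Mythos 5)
Your proposal is correct and takes essentially the same route as the paper: compute the differential of $\Phi_{d,l}$ at a point $p$ by the Leibniz rule, observe that its image is exactly $I_d$ for the ideal $I$ built from the coordinates of $p$, and then invoke the characteristic-zero equivalence between dominance of $\Phi_{d,l}$ and surjectivity of its differential at a generic point together with Lemma~\ref{dominantmaplemma}. The only difference is that you spell out the generic-smoothness step and the matching of ``generic'' quantifiers somewhat more explicitly than the paper does.
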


\begin{proof}
Using Lemma \ref{dominantmaplemma} we just have to show that
$\Phi_{d,l}$ is a dominant map if and only if $I_d = S_d$.
In order to do this we will
determine the tangent space to the image of $\Phi_{d,l}$ at a
generic point $q=\Phi_{d,l}(p)$, where
$p=(L_1,\ldots,L_l,\ldots,M_\sigma,\ldots)$.  We denote
with $T_q$ this affine tangent space.

The elements of the tangent space $T_q$ are obtained as
\[\left.{d \over dt}\right|_{t=0}
\Phi_{d,l}\left(L_1+tL_1',\ldots,L_l+tL_l',M_{\{1,2,3\}}+tM_{\{1,2,3\}}',\ldots,
M_\sigma+tM_\sigma',\ldots\right)\]
\[ = \left.{d \over dt}\right|_{t=0} \sum_{\sigma=\{i,j,k\} \subseteq [l]}(L_i+tL'_i)(L_j+tL_j')(L_k+tL'_k)(M_\sigma
+tM'_\sigma) \] when we vary the forms $L'_i \in S_1$ and
$M_\sigma'\in S_{d-3}$. By a direct computation we see that the
elements of $T_q$ have the form
\[\sum_{\sigma =\{i,j,k\} \subseteq [l]}[L_i'L_{j}L_kM_{\sigma}+L_iL_j'L_kM_{\sigma}+L_iL_jL'_kM_{\sigma}+L_iL_jL_kM'_{\sigma}]\]
\[=\left(\sum_{i=1}^l L_i'\left(\sum_{i\in\sigma\subseteq [l]} {L_\sigma M_\sigma \over L_i} \right)\right)
+\left(\sum_{\sigma \subseteq [l]}L_\sigma M'_\sigma\right).\]
Since the $L_i',L_j',L_k'\in S_1$ and $M'_\sigma \in S_{d-3}$ can
be chosen freely, we obtain that $I_d=T_q$.
\end{proof}

\begin{rem}\label{cocoacomputREMARK}
Lemma \ref{tgspaceideallem} can be used to computationally provide
a positive answer for each tuple of the form $(n,n+2,3,d)$. To do
this, we proceed as follows. Given $d$ and $l=n+2$ we construct
the ideal $I$ as described above by choosing forms $L_i$ and $M_i$. We
then compute $\dim_\CC I_d$ using a computer algebra system. If
$\dim_\CC I_d$ = $\dim_\CC S_d$, then, by upper semicontinuity, we
have proved that Question \ref{question} has an affirmative answer
for that tuple $(n,n+2,3,d)$. On the other hand, if we pick
$L_i$'s and $M_i$'s such that $\dim_\CC I_d<\dim_\CC S_d$ we
cannot eliminate $(n,n+2,3,d)$ since another choice of forms may
give equality.
\end{rem}


\section{Base case:  $\XX(4)$ in $\pr^2$}

We now show that the generic degree $d \geq 3$ hypersurface of
$\PP^2$ contains a star configuration $\XX(4)$, i.e., we prove
Theorem \ref{mainresult} for $(2,4,3,d)$ for all $d \geq 3$.  Note
that this result was already proved in \cite{CVT}, but we give a new
proof that better lends itself to our induction
argument for proving that the generic degree $d \geq 3$
hypersurface of $\PP^n$ for all $n \geq 2$ contains a star
configuration $\XX(n+2)$.

We first begin with a lemma about matrices that shall prove useful:

\begin{lem}\label{matrixG}
Let $\mathcal A_r=(a_{ij})$ be a square $r\times r$ matrix
where $r>1$ and  $a_{ij}=\left\{
\begin{array}{ll}
    1 & \hbox{if $i\neq j$;} \\
    0 & \hbox{if $i=j$.} \\
\end{array}
\right.$, i.e.,
\begin{equation}\label{matrix}
\mathcal A_r=\left(
\begin{array}{ccccccc}
  0 & 1 & 1&\ldots &  1 \\
  1 & 0& 1&\ldots & 1 \\
   1&1&\ddots&\ddots&1\\
  \vdots & & \ddots &\ddots &1 \\
  1 & 1 &\ldots & 1&0 \\
\end{array}
\right).\end{equation}
Then $\mathcal A_r$ has maximal rank.
\end{lem}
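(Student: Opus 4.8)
The plan is to show that the matrix $\mathcal{A}_r = J_r - I_r$, where $J_r$ is the all-ones matrix and $I_r$ the identity, is nonsingular by computing its eigenvalues explicitly. First I would observe that $J_r$ has rank one, with eigenvalue $r$ on the eigenvector $(1,1,\ldots,1)^T$ and eigenvalue $0$ on the $(r-1)$-dimensional subspace $\{(v_1,\ldots,v_r) : \sum v_i = 0\}$. Since $\mathcal{A}_r = J_r - I_r$ is a polynomial in $J_r$, it is simultaneously diagonalizable: its eigenvalues are $r - 1$ (with multiplicity one) and $-1$ (with multiplicity $r-1$). Hence $\det \mathcal{A}_r = (r-1)(-1)^{r-1}$, which is nonzero precisely because $r > 1$. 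Therefore $\mathcal{A}_r$ has maximal rank.

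An alternative, even more elementary route avoids eigenvalues entirely: one can row-reduce directly. Subtract the last row from each of the first $r-1$ rows; each resulting row has the form $(0,\ldots,0,-1,0,\ldots,0,1)$ with the $-1$ in position $i$ and the $1$ in position $r$. These $r-1$ rows are visibly linearly independent (the leading $-1$'s sit in distinct columns $1,\ldots,r-1$), and together with the last row $(1,1,\ldots,1,0)$ — whose entry in column $r-1$ is nonzero while all the reduced rows have a $0$ there except the $(r-1)$-st, which one can clear — one gets a triangular-type system showing full rank. I would probably present the eigenvalue computation as the main argument since it is cleanest, and perhaps remark that $\det \mathcal{A}_r = (-1)^{r-1}(r-1)$.

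There is no real obstacle here: the statement is a standard fact about the matrix $J - I$, and the only thing to be careful about is the hypothesis $r > 1$, which is exactly what guarantees the eigenvalue $r-1$ is nonzero (for $r = 1$ the matrix is the $1\times 1$ zero matrix, which is why the hypothesis is needed). I would keep the write-up to a few lines.
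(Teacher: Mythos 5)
Your main argument is correct and is essentially the same as the paper's: both rest on the observation that $\mathcal{A}_r + I_r$ is the all-ones matrix $J_r$ with spectrum $\{r, 0, \ldots, 0\}$. The paper phrases it as a short contradiction (if $\mathcal{A}_r v = 0$ for $v \neq 0$, then $J_r v = v$, so $1$ would be an eigenvalue of $J_r$), while you compute the spectrum $\{r-1, -1, \ldots, -1\}$ of $\mathcal{A}_r$ directly and read off the nonzero determinant $(-1)^{r-1}(r-1)$; this is a marginally more informative rephrasing of the same idea, and both correctly use $r>1$ at the decisive point.
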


\begin{proof} Consider the square $r\times r$ matrix
$U=\mathcal {A}_r+I_r$, i.e. this is a matrix
where every element is equal to one. Clearly ${\rm rk}(U)=1$, and
the eigenvalues are $r$, with multiplicity one, and $0$, with
multiplicity $(r-1)$. Suppose that ${\rm det}(\mathcal{A}_r)=0$. Then
there exists an eigenvector $v \neq 0$ such that $\mathcal{A}_r
v=0$, and hence, $(\mathcal {A}_r+I_r)v = \mathcal{A}_rv + I_rv = v$,
i.e., $U$ should have $1$ as an eigenvalue.
From this contradiction, we deduce that ${\rm rk}(\mathcal{A}_r)=r$.
\end{proof}

We will use the following notation in the proof given below. Let $d \geq 3$
be a non-negative integer, let
 $L_1,\ldots,L_4\in S_1$  be four generic linear
forms in $S = \mathbb{C}[x_0,x_1,x_2]$ and consider any six forms
\[\{M_\sigma \in S_{d-3} ~|~ \sigma \subseteq [4] ~~\text{and}~~ |\sigma| = 3\}.\]
We will abuse notation and write $M_{ijk}$ for $M_{\{i,j,k\}}$.
Using these forms, we define the follow four forms of degree
$d-1$:
\begin{eqnarray*}
Q_1 &= &M_{123}L_{2}L_3 + M_{124}L_2L_4 + M_{134}L_3L_4 \\
Q_2 &= &M_{123}L_1L_3 + M_{124}L_1L_4 + M_{234}L_3L_4 \\
Q_3 & =&M_{123}L_1L_2 + M_{134}L_1L_4+ M_{234}L_2L_4 \\
Q_4 & =&M_{124}L_1L_2 + M_{134}L_1L_3 + M_{234}L_2L_3.
\end{eqnarray*}
With this notation, we form the ideal
\begin{equation}\label{ideal}
I=(L_{1}L_2L_3,\ldots, L_2L_{3}L_4,Q_1,\ldots,Q_4)\subset S.
\end{equation}
Then for  $d \geq 3$, we give an affirmative answer to Question
\ref{question}:

\begin{thm}\label{l=4n=2}
The generic degree $d\geq 3$ curve in $\pr^2$ contains a
$\mathbb{X}(4)$.
\end{thm}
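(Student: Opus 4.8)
The plan is to apply Lemma \ref{tgspaceideallem} with $n=2$ and $l=4$: it suffices to exhibit one choice of four generic linear forms $L_1,\ldots,L_4 \in S_1$ and six forms $M_\sigma \in S_{d-3}$ for which the ideal $I$ of \eqref{ideal} satisfies $I_d = S_d$. By upper semicontinuity of the corank of the evaluation map (cf.\ Remark \ref{cocoacomputREMARK}), a single well-chosen example proves the statement for all generic $F$ of degree $d$. The strategy is to make the $M_\sigma$'s \emph{as simple as possible} so that the $Q_i$'s become tractable — the natural first attempt is $M_\sigma = x_2^{d-3}$ for every $\sigma$, or more flexibly $M_\sigma = c_\sigma x_2^{d-3}$ with scalars $c_\sigma$ to be chosen.

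First I would record that $(L_1L_2L_3,\ldots,L_2L_3L_4)$ is the ideal $I_{\XX(4,3)}$ of six points in $\pr^2$, and by Lemma \ref{productintersectionlemma}$(iv)$ and Theorem \ref{hfgeneric} it is generated by $\binom{4}{2}=6$ independent cubics and has the Hilbert function of six generic points; in particular $\dim_\CC(I_{\XX(4,3)})_d = \binom{d+2}{2} - 6$ for $d \geq 3$. So $I_d = S_d$ will follow once I show that, modulo $(I_{\XX(4,3)})_d$, the four forms $Q_1,\ldots,Q_4$ together span a space of dimension at least $6$ (note $6 = \binom{4+2}{2}-\binom{3+2}{2}$ is exactly what is missing up through degree $4$, and for larger $d$ the six points are already enough from degree $d\geq ?$ — one must check the small cases $d=3,4$ by hand, where $Q_1,\ldots,Q_4$ are the genuinely needed extra forms). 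The key computation is therefore to evaluate the residues of the $Q_i$ at the six points of $\XX(4,3)$, or equivalently to understand the $6\times 4$ matrix whose $(\sigma, i)$ entry is $Q_i$ evaluated at the point $P_{[4]\setminus\sigma}$ dual to $\sigma$. By the disjointness argument used in the proof of Theorem \ref{l-r+1>n}, evaluating $Q_i$ at the point $P_{jk}$ (the intersection of $V(L_j),V(L_k)$) kills every term of $Q_i$ except the single one indexed by $\{i\}\cup\{$ the two indices complementary to $\{i,j,k\}$ appropriately $\}$ — this is exactly where the matrix $\mathcal{A}_r$ of Lemma \ref{matrixG} enters: the combinatorics of which $Q_i$ survives at which point produces (a scalar rescaling of) the all-ones-off-diagonal matrix $\mathcal{A}_4$, which has full rank $4$.

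The main obstacle, and the step I expect to require real care, is bookkeeping the passage from "the $Q_i$ are independent modulo the point ideal in high degree" to the two genuinely low-degree cases $d=3$ and $d=4$, where $S_d$ has dimension $10$ and $15$ while $(I_{\XX(4,3)})_d$ has dimension $4$ and $9$ respectively, so one needs the $Q_i$ to contribute a \emph{full} $6$ and $6$ new dimensions — but there are only four $Q_i$'s, so one must use that multiplying the six cubic generators $L_\sigma$ by linear forms and the $Q_i$ by constants, all living inside $I_d$, already fills out $S_d$. Concretely I would argue: $(I_{\XX(4,3)})_d$ plus the $\CC$-span of $L_1 Q_1, L_2 Q_2$, etc., has the right codimension, and to see this I reduce to checking that the $4\times 4$ evaluation matrix $(Q_i(P_{jk}))$ — which by the residue computation above is $\mathcal{A}_4$ up to units — is invertible, handing the result to Lemma \ref{matrixG}. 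The remaining verification that this local (evaluation-at-points) nondegeneracy globalizes to $I_d=S_d$ for all $d\geq 3$ is then a short Hilbert-function argument: since $S/I_{\XX(4,3)}$ stabilizes at $6$, and the four $Q_i$ cut this $6$-dimensional space down to $0$ in the relevant degrees (their residue vectors spanning a $4$-dimensional, hence after using the generators-times-linear-forms a full, complement), we get $(S/I)_d = 0$, i.e.\ $I_d = S_d$, completing the proof and hence the case $(2,4,3,d)$ of Theorem \ref{mainresult}.
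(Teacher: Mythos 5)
Your high-level plan is the right one and matches the paper's strategy: reduce via Lemma \ref{tgspaceideallem} to showing $I_d = S_d$ for one good choice of $L_i, M_\sigma$, pass to $A = S/(L_{123},\ldots,L_{234})$ where $\dim A_d = 6$ by Theorem \ref{hfgeneric}, and then invoke the evaluation-at-points trick together with Lemma \ref{matrixG}. But the key matrix step is wrong as stated, and the error is not cosmetic.

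You want to evaluate the $Q_i$'s at the six points $p_{jk}$ of $\XX(4)$ and claim the resulting matrix is (a rescaling of) $\mathcal{A}_4$. That cannot be right on shape alone: there are six points indexed by $2$-subsets of $[4]$ and only four $Q_i$'s, so the matrix $\bigl(Q_i(p_{jk})\bigr)$ is $6 \times 4$, not $4 \times 4$. Worse, the combinatorics is not that of $\mathcal{A}_4$: a direct computation shows $Q_i(p_{jk}) = L_jL_kM_{\{i,j,k\}}(p_{jk})$ when $i \notin \{j,k\}$ and $Q_i(p_{jk}) = 0$ when $i \in \{j,k\}$, so each row of the $6\times 4$ matrix has exactly \emph{two} nonzero entries. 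There is no natural $4\times 4$ minor of this that is $\mathcal{A}_4$, and picking four points out of six has to be justified against the full rank target $6$. The real issue, which you almost diagnose, is that $Q_1,\ldots,Q_4$ have degree $d-1$; what actually lives in $I_d$ are the products $L_aQ_b$, and you need \emph{six} such products to match $\dim A_d = 6$. Choosing which six is exactly where the work happens (the paper takes $L_3Q_1, L_1Q_2, L_2Q_3, L_4Q_1, L_4Q_2, L_4Q_3$), and the resulting $6\times 6$ evaluation matrix has a block structure whose bottom-right $3\times 3$ block, after factoring out a diagonal and specializing the $M_\sigma$'s, is $\mathcal{A}_3$ --- not $\mathcal{A}_4$. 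So Lemma \ref{matrixG} does enter, but applied to $\mathcal{A}_3$, and only after a nontrivial reduction; your proposal skips that reduction and misidentifies the matrix. Also note that your suggested specialization "$M_\sigma = c_\sigma x_2^{d-3}$ for all $\sigma$" is too rigid for $d > 3$: with all $M_\sigma$ proportional to the same form, the bottom $3\times 3$ block does not acquire the triangular (or $\mathcal{A}_3$) structure needed, and its invertibility is not clear. The paper instead picks \emph{different} powers $L_j^{d-3}$ for different $M_\sigma$ so that the block becomes visibly triangular with nonzero diagonal entries.
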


\begin{proof}
Our strategy is to use Lemma \ref{tgspaceideallem} to show that the
ideal $I$ of \eqref{ideal} has the
property that $I_{d} = S_d$.  In particular, given $4$ generic
linear forms $L_1,\ldots,L_4$, we need to pick forms
$M_{\sigma}$ with $\sigma \subseteq [4]$ and $|\sigma| =3$ so the
ideal \eqref{ideal} has this desired property.

Because the generators
$\{L_{\sigma} ~|~ \sigma \subseteq [4] ~\text{and}~ |\sigma|=3\}$
are the generators of a star configuration $\XX(4)$, we know by
Theorem \ref{hfgeneric} that for all $d \geq 3$
\[\dim_{\mathbb{C}} (S/(L_{123},\ldots,L_{234}))_d = 6.\]
If we set $A =  S/(L_{123},\ldots,L_{234})$, it therefore suffices
to find 6 linear independent elements in
$I/(L_{123},\ldots,L_{234})$ of degree $d$.  We will prove
that the equivalence classes of the following 6 elements in $A$
are linearly independent
\begin{equation}\label{formsl=5}
L_2Q_3,L_1Q_2,L_3Q_1,L_4Q_1,L_4Q_2,L_4Q_2,L_4Q_3
\end{equation}
for a generic choice of the forms $M_\sigma$ with $\deg M_\sigma=d-3$.
As noted by Remark \ref{cocoacomputREMARK}, it is
enough to show these forms are linearly independent for a special choice of forms for $M_{\sigma}$.

We first construct an evaluation table.  For each
$\tau =\{i,j\} \subseteq [4]$, let
\[p_{r,s} := V(L_i) \cap V(L_j) ~~~\mbox{where $\{r,s\} \cup \tau = [4]$}\]
denote one of the six points of $\XX(4)$.
We construct the following
evaluation table
where entry $(i,j)$ is formed by evaluating
the polynomial labeling column $j$ at the point
labeling row $i$.
\begin{equation}\label{evl}
\begin{array}{c|cccccc}
       & L_3Q_ 1       & L_1Q_2          & L_2Q _3         & L_4Q_1          & L_4Q_2         & L_4Q_3          \\
\hline
p_{1,2}& 0              & 0              & M_{123}L_1L_2^2    & 0              & 0              & 0               \\
p_{1,3}& 0              & M_{123}L_1L_3^2 & 0                 & 0              & 0              & 0               \\
p_{2,3}& M_{123}L_2L_3^2 &   0            & 0                 & 0              & 0              & 0               \\
p_{1,4}& 0              & M_{124}L_1^2L_4  & 0                & 0              & M_{124}L_1L_4^2  & M_{134}L_1L_4^2  \\
p_{2,4}& 0              & 0              & M_{234}L^2_2L_4    & M_{124}L_2L_4^2 & 0               & M_{234}L_2L_4^2  \\
p_{3,4}& M_{134}L^2_3L_4 & 0              & 0                 & M_{134}L_3L_4^2 & M_{234}L_3L_4^2    &0               \\
\end{array}\end{equation}
For example, the entry $(4,2)$ is the polynomial
$L_1Q_2$ evaluated at $p_{1,4}$, that is
\begin{eqnarray*}
L_1Q_2(p_{1,4}) & = & (M_{123}L_1^2L_3 + M_{124}L_1^2L_4 + M_{234}L_1L_3L_4)(p_{1,4})  \\
&= &  (M_{124}L_1^2L_4)(p_{1,4}) ~~\mbox{since $L_3(p_{1,4}) = 0$.}
\end{eqnarray*}
\normalsize With a slight abuse of notation we adopt the following
convention that if in the row indexed by  $p_{i,j}$ we write
$M_{ijk}L^c_aL^d_b$, then  this a shorthand form for
$(M_{ijk}L^c_aL^d_b)(p_{i,j})$.

Observe that the evaluation matrix (\ref{evl}) holds for any choice of
$M_\sigma \in S_{d-3}$.  For each $d \geq 3$, we want to show one
can pick specific $M_{\sigma}$'s so that this matrix has rank 6.  It
would then follow that the forms \eqref{formsl=5} are linearly
independent in $A$, and the conclusion follows.

Note that for any nonzero choice $M_{123}$, the first three rows of this matrix are linear independent.  We will be finished
if we can show that the submatrix formed by the last three rows and three columns has maximal rank, i.e,
we can find choices for $M_{124}, M_{134}$, and $M_{234}$ that make the matrix
\begin{equation}\label{matrix2}
\begin{array}{c|ccc}
       & L_4Q_1          & L_4Q_2         & L_4Q_3          \\
\hline
p_{1,4}& 0 & M_{124}L_1L_4^2  & M_{134}L_1L_4^2  \\
p_{2,4}& M_{124}L_2L_4^2 & 0               & M_{234}L_2L_4^2  \\
p_{3,4}& M_{134}L_3L_4^2 & M_{234}L_3L_4^2    &0               \\
\end{array}\end{equation}
have rank three.

When $d = 3$, we set $M_{\sigma} =1 $ when $4 \in \sigma$.  We can therefore factor
the above matrix as
\[
\begin{bmatrix}
L_1L_4^2 & 0 & 0 \\
0       & L_2L_4^2 & 0 \\
0 & 0 & L_3L_4^2
\end{bmatrix}
\begin{bmatrix}
0 & 1 & 1 \\
1 & 0 & 1 \\
1 & 1 & 0
\end{bmatrix}.\]
The first matrix is clearly invertible, and the second matrix is invertible by Lemma \ref{matrixG}.  Thus the matrix (\ref{matrix2}) has rank three, and thus the entire
evaluation matrix (\ref{evl}) has maximal rank.

When $d > 3$, we set
\[
\begin{array}{rclrcrrr}
M_{124} & = & L_2^{d-3} &  &  M_{234} = L_3^{d-3}&& M_{134} = L_4^{d-3}. \\
\end{array}
\]
When we use this choice of $M_{\sigma}$'s, the evaluation matrix (\ref{matrix2})  given above becomes
\[
\begin{array}{c|ccc}
       & L_4Q_1          & L_4Q_2         & L_4Q_3          \\
\hline
p_{1,4}& 0     & 0       & \star  \\
p_{2,4}& \star & 0       & 0 \\
p_{3,4}& \star & \star   & 0               \\
\end{array}\]
where $\star$ represents a non-zero value.
But then it is immediate that this matrix has
rank three, and thus the entire matrix (\ref{evl}) has maximal rank.
\end{proof}


\section{Induction Step: $\XX(n+2)$ in $\pr^n$}

We now prove the general situation.

\begin{thm}\label{l=n+2}
The generic degree $d\geq 3$ hypersurface of $\pr^n$ contains a
$\mathbb{X}(n+2)$.
\end{thm}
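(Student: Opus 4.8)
The strategy is to prove Theorem~\ref{l=n+2} by induction on $n$, with the base case $n=2$ supplied by Theorem~\ref{l=4n=2}. By Lemma~\ref{tgspaceideallem}, it suffices to exhibit, for each $n$ and each $d\geq 3$, a choice of $l=n+2$ generic linear forms $L_1,\ldots,L_{n+2}\in S=\mathbb{C}[x_0,\ldots,x_n]$ and a choice of the $\binom{n+2}{3}$ forms $M_\sigma\in S_{d-3}$, such that the ideal
\[
I=(L_\sigma \mid \sigma\subseteq [n+2],\ |\sigma|=3)+(Q_1,\ldots,Q_{n+2})
\]
satisfies $I_d=S_d$. Working modulo the star-configuration ideal $J=I_{\XX(n+2)}$, Theorem~\ref{hfgeneric} tells us $\dim_{\mathbb{C}}(S/J)_d=\binom{n+2}{n}=\binom{n+2}{2}$ for all $d\geq n-1$ (in particular for $d\geq 3$ once $n\leq 4$; for larger $n$ the relevant range $d\geq 3$ may force a separate small-$d$ check, see below). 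So the goal reduces to producing $\binom{n+2}{2}$ elements of $I/J$ of degree $d$ whose classes in $A:=S/J$ are linearly independent. As in the proof of Theorem~\ref{l=4n=2}, natural candidates are products $L_jQ_i$ for suitable pairs $(i,j)$, and linear independence in $A$ is detected by an evaluation table against the $\binom{n+2}{n}$ points of $\XX(n+2)$, each point being $P_\tau=V(L_{j_1})\cap\cdots\cap V(L_{j_n})$ for $\tau=[n+2]\setminus\{a,b\}$ a complementary pair.

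\textbf{The inductive mechanism.} The key observation is that the forms $L_1,\ldots,L_{n+1}$ span a coordinate-like subconfiguration: restricting attention to the $\binom{n+1}{3}$ forms $M_\sigma$ with $\sigma\subseteq[n+1]$ and to the points $P_\tau$ with $n+2\notin\tau$ should, after a suitable specialization of the ``new'' data indexed by $n+2$, reproduce the $\XX(n+1)$-situation in the hyperplane $V(L_{n+2})\cong\pr^{n-1}$ — this is where the induction hypothesis (Theorem~\ref{l=n+2} for $n-1$, i.e. $\XX(n+1)\subset\pr^{n-1}$) enters. Concretely I would split the candidate forms into a block coming from $\sigma\subseteq[n+1]$ (handled by induction) and a block of forms involving the index $n+2$, and choose the $M_\sigma$ with $n+2\in\sigma$ to be monomials in the $L_i$'s — mirroring the two explicit choices ($d=3$ versus $d>3$) made in the proof of Theorem~\ref{l=4n=2}, where $M_\sigma=1$ or $M_\sigma=L_a^{d-3}$. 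The evaluation table then becomes block triangular: the induction hypothesis makes one diagonal block invertible, and the new block reduces to a combinatorial matrix in the spirit of $\mathcal A_r$ from Lemma~\ref{matrixG} (an ``all-ones-off-diagonal'' type matrix, possibly scaled by nonzero products of the $L_i$ evaluated at the relevant points, which are nonzero precisely because the $L_i$ are general). Invertibility of that block follows from Lemma~\ref{matrixG} or a direct determinant computation.

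\textbf{The main obstacle.} The delicate point is bookkeeping: choosing exactly which $\binom{n+2}{2}$ products $L_jQ_i$ to take, and verifying that with the monomial choice of $M_\sigma$ the resulting $\binom{n+2}{2}\times\binom{n+2}{2}$ evaluation matrix is genuinely block triangular with invertible diagonal blocks. One must check that each chosen form $L_jQ_i$, when evaluated at each point $P_\tau$, picks out a single surviving monomial term (all other terms vanishing because some $L_k$ with $k\in\tau$ divides them) and that the surviving pattern has the triangular structure — this is exactly the phenomenon visible in tables~\eqref{evl} and~\eqref{matrix2} but now must be organized uniformly in $n$. A secondary nuisance is that Theorem~\ref{hfgeneric} only gives the Hilbert function equality $\dim(S/J)_d=\binom{n+2}{2}$ for $d\geq l-n = 2$, which is fine, but one should confirm the count $\binom{n+2}{2}$ of candidate forms matches, and separately confirm there is no low-degree obstruction when $n$ is large and $d=3$ is below the ``$d\gg 0$'' threshold implicitly used elsewhere — here, though, $d\geq 3= r$ and $l-n+1 = 3$, so $d\geq l-n+1$ holds and Theorem~\ref{hfgeneric} applies. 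I expect the rest — the determinant of the combinatorial block and the invocation of the induction hypothesis — to be routine once the indexing is set up correctly.
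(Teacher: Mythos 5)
Your proposal reproduces the paper's proof in all essential respects: induction on $n$ with base case $n=2$ from Theorem~\ref{l=4n=2}, an evaluation matrix of block-triangular shape in which the top-left block (indexed by $\sigma$ not containing the new form and points not on its hyperplane) is handled by the induction hypothesis and the bottom-right $(n+2)\times(n+2)$ block $\mathcal{G}$ is shown invertible either by factoring through Lemma~\ref{matrixG} (for $d=3$, taking every new $M_\sigma=1$) or by an explicit monomial choice making it triangular (for $d>3$). The bookkeeping you defer --- the paper's choice $M_{i,j,n+3}=L_j^{d-3}$ with the single exception $M_{1,n+2,n+3}=L_{n+3}^{d-3}$, and the check that exactly one term survives at each point $p_{i,n+3}$ to give a nonsingular triangular pattern --- is precisely what the paper carries out, so your outline is faithful and correct even though it is not a self-contained proof.
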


\begin{proof} We work by induction on $n.$ If $n=2$, then the result
is true by Theorem \ref{l=4n=2}.

To better understand the induction step, we will show how we pass
from the case $n=2$ to $n=3$. In $\pr^3$, $l=5$, and thus $\XX(5)$ contains $10$
points $\{p_{i,j} ~|~ 1 \leq i < j \leq 5\}$.  In particular
for each $\tau = \{i_1,\ldots,i_3\} \subseteq [5]$ with $|\tau| =3$,
\[p_{r,s} := V(L_{i_1}) \cap \cdots \cap V(L_{i_3}) ~~~\mbox{where $\{r,s\} \cup \tau = [5]$
for general linear forms $L_1,\ldots,L_5$.}\]

For each $d\geq 3$, we construct an evaluation matrix $\mathcal {M}_3$
of size $10\times 10$ in the following way.  Let $Q_1,\ldots,Q_5$ be the forms
constructed from $L_1,\ldots,L_5$ as in Lemma \ref{tgspaceideallem}.  Our evaluation
matrix is then:
\[\mathcal{M}_3=
\begin{array}{c|ccc|cccc}
       & L_3Q_1 & \cdots & L_4Q_3 &L_5Q_1 & L_5Q_2 & \cdots & L_5Q_4\\
\hline
p_{1,2} &        &        &       &       &        &       &  \\
\vdots &        & \overline{\mathcal{M}}_2 &&&    &  {\bf 0}    &  \\
p_{3,4} &        &        &       &       &        &       &  \\
\hline
p_{1,5} &        &        &       &        &       &        & \\
\vdots &        & \mathcal{F} &  &        &       &  \mathcal{G}      & \\
p_{4,5} &        &        &       &        &       &        &
\end{array}
\]
where the matrix $\overline{\mathcal{M}_2}$ is formally
the same as the $6 \times 6$ matrix constructed in the proof
of Theorem \ref{l=4n=2}, ${\bf 0}$ is the $6 \times 4$ zero matrix,
$\mathcal{F}$ is a $4 \times 6$ matrix, and $\mathcal{G}$
is a $4 \times 4$ matrix.  It should be clear that the top right block
is the zero matrix since each point $p_{1,2},\ldots,p_{3,4}$ vanishes
on the line $V(L_5)$.

As in Theorem \ref{l=4n=2}, we need to show that we can pick the
$M_{\sigma}$'s that
appear in the construction of
$Q_1,\ldots,Q_5$ so that the above evaluation matrix
has ${\rm rk}(\mathcal{M}_3) = 10$.  By induction, we can find $M_{\sigma}$'s
with $\sigma \subseteq [4]$ and $|\sigma| = 3$ so that the matrix
$\overline{\mathcal{M}}_2$ has maximal rank.  We will therefore finish the proof
for the $n=3$ case if we can show that the matrix $\mathcal{G}$ has rank 4.

The matrix $\mathcal G$ is a $4\times 4$ matrix of type
\begin{center}
\begin{tabular}{c|cccccccccc|cccccc} &  $L_5Q_1$ & $L_5Q_2$ & $L_5Q_3$ & $L_5Q_4$ \\
\hline
  $p_{15}$ &0&$M_{125}L_1L_5^2$&$M_{135}L_1L_5^2$&$M_{145}L_1L_5^2$\\
 $p_{25}$ &$M_{125}L_2L_5^2$ & 0& $M_{235}L_2L_5^2$ & $M_{245}L_2L_5^2$ \\
 $p_{35}$ &$M_{135}L_3L_5^2$ &$M_{235}L_3L_5^2$&0&$M_{345}L_3L_5^2$ \\
 $p_{45}$  & $M_{145}L_4L_5^2$ & $M_{245}L_4L_5^2$ & $M_{345}L_4L_5^2$ &0
\end{tabular}
\end{center}
Again, as in the proof of Theorem \ref{l=4n=2}, we write $M_{ijk}L_a^bL_c^d$ to mean
the value of $M_{ijk}L_a^bL_c^d(p_{r,s})$.
Note that no $M_{\sigma}$ with $\sigma \subseteq [4]$ appears in the above matrix,
so fixing these values in $\overline{\mathcal{M}}_2$ has no impact in $\mathcal{G}$.

When $d =3$, we set each $M_{ij5} = 1$ when defining $Q_1,\ldots,Q_5$.  In this case,
we can factor $\mathcal{G}$ as
\[
\begin{bmatrix}
L_1L_5^2 & 0 & 0 & 0 \\
0 & L_2L_5^2 & 0 & 0 \\
0 & 0 & L_3L_5^2 & 0 \\
0 & 0 & 0 & L_4L_5^2 \\
\end{bmatrix}
\begin{bmatrix}
0 & 1 & 1 & 1 \\
1 & 0 & 1 & 1 \\
1 & 1 & 0 & 1 \\
1 & 1 & 1 & 0 \\
\end{bmatrix}.
\]
Both matrices are invertible (we are using Lemma \ref{matrixG} for the second matrix),
so the  matrix $\mathcal{G}$ is invertible, and thus has maximal rank.

When $d  > 3$, we set
\[
\begin{array}{rclrl}
M_{125} & = & L_2^{d-3} &  &  M_{235}  =  L_3^{d-3}\\
M_{135} & = & L_3^{d-3} &  &  M_{245} = L_4^{d-3} \\
M_{145} & = & L_5^{d-3} &  &  M_{345} = L_4^{d-3}. \\
\end{array}
\]
With these choices, the evaluation matrix $\mathcal{G}$ becomes:
\begin{center}
\begin{tabular}{c|cccccccccc|cccccc} &  $L_5Q_1$ & $L_5Q_2$ & $L_5Q_3$ & $L_5Q_4$ \\
\hline
 $p_{16}$ &0       & 0       &  0       &$\star$ \\
 $p_{26}$ &$\star$ & 0       &  0       & 0      \\
 $p_{36}$ &$\star$ & $\star$ &  0       & 0      \\
 $p_{46}$ &$\star$ & $\star$ &  $\star$ & 0
\end{tabular}
\end{center}
where $\star$ is a non-zero value.  In this form, it is clear that the matrix
has maximal rank.

We now describe the general induction step.  That is, suppose that
$d\geq 3$ and that the theorem is true for $\pr^n$. We prove the
statement for $\pr^{n+1}$.

Let $L_1,\ldots,L_{n+3}$ be $n+3$ generic linear forms.  For each
$\tau = \{i_1,\ldots,i_{n+1}\} \subseteq [n+3]$ with $|\tau| = n+1$, we set
\[p_{r,s} := V(L_{i_1}) \cap \cdots \cap V(L_{i_{n+1}}) ~~~\mbox{where $\{r,s\} \cup \tau = [n+3]$.}\]
To finish the proof, it suffices to show that we can find choices
for $M_\sigma$ as $\sigma \subseteq [n+3]$ with $|\sigma| = 3$
so that the evaluation matrix
\[\mathcal{M}_{n+1}=
\begin{array}{c|ccc|cccc}
       & L_5Q_1 & \cdots & L_{n+2}Q_{n+1} &L_{n+3}Q_1 & L_{n+3}Q_2 & \cdots & L_{n+3}Q_{n+2}\\
\hline
p_{1,2} &        &        &       &       &        &       &  \\
\vdots &        & \overline{\mathcal{M}}_n &&&    &  {\bf 0}    &  \\
p_{n+1,n+2} &        &        &       &       &        &       &  \\
\hline
p_{1,n+3} &        &        &       &        &       &        & \\
\vdots &        & \mathcal{F} &  &        &       &  \mathcal{G}      & \\
p_{n+2,n+3} &        &        &       &        &       &        &
\end{array}
\]
has maximal rank.  Here, $\overline{\mathcal{M}}_n$ is formally the same matrix as
$\mathcal{M}_n$, the matrix ${\bf 0}$ is an appropriate sized zero matrix, $\mathcal{F}$
is a $(n+2) \times \binom{n+2}{2}$ matrix, and $\mathcal{G}$ is a $(n+2) \times (n+2)$
matrix of the form:
\footnotesize
\begin{center}
\begin{tabular}{c|ccccc} &  $L_{n+3}Q_1$ & $L_{n+3}Q_2$ & $\cdots$ & $L_{n+3}Q_{n+1}$ & $L_{n+3}Q_{n+2}$  \\
\hline
  $p_{1,n+3}$ &0&$M_{1,2,n+3}L_1L_{n+3}^2$& $\cdots$ &$M_{1,n+1,n+3}L_1L_{n+3}^2$&$M_{1,n+2,n+3}L_1L_{n+3}^2$ \\
 $p_{2,n+3}$ &$M_{1,2,n+3}L_2L_{n+3}^2$ & 0& $\cdots$ & $M_{2,n+1,n+3}L_2L_{n+3}^2$ & $M_{2,n+2,n+3}L_2L_{n+3}^2$\\
 $\vdots$ &   $\vdots$ &   & $\ddots$ &   & $\vdots$ \\
 $p_{n+1,n+3}$  & $M_{1,n+1,n+3}L_{n+1}L_{n+3}^2 $ & $M_{2,n+1,n+3}L_{n+1}L_{n+3}^2$ & $\cdots$ &0& $M_{n+1,n+2,n+3}L_{n+1}L_{n+3}^2$\\
 $p_{n+2,n+3}$ & $M_{1,n+2,n+3}L_{n+2}L_{n+3}^2$ & $M_{2,n+2,n+3}L_{n+2}L_{n+3}^2$ & $\cdots$ & $M_{n+1,n+2,n+3}L_{n+2}L_{n+3}^2$ &0
\end{tabular}
\end{center}
\normalsize

By induction, we can find $M_\sigma$ with $\sigma \subseteq [n+3]$
and $|\sigma| = 3$, and $n+3 \not\in \sigma$ so that the matrix
$\overline{\mathcal{M}_{n}}$ has maximal rank.  It remains to show
that $\mathcal{G}$ has maximal rank.

As in the case $n=3$, when $d=3$, we set every $M_\sigma = 1$ when
$\sigma \subseteq [n+3]$ with $|\sigma| = 3$, and $n+3 \in
\sigma$.  Using Lemma \ref{matrixG}, we can show that
$\mathcal{G}$ has maximal rank.  When $d > 3$, we set
\[M_{i,j,n+3} = L_j^{d-3}  ~~~\mbox{for all $\sigma \subseteq [n+3]$ with $|\sigma| = 3$, and
$n+3 \in \sigma$}\]
{\bf except} for $M_{1,n+2,n+3}$, which we set to $M_{1,n+2,n+3} = L_{n+3}^{d-3}$.  The evaluation
matrix $\mathcal{G}$ then becomes
\begin{center}
\begin{tabular}{c|ccccc} &  $L_{n+3}Q_1$ & $L_{n+3}Q_2$ & $\cdots$ & $L_{n+3}Q_{n+1}$ & $L_{n+3}Q_{n+2}$  \\
\hline
  $p_{1,n+3}$ &$0$&$0$& $\cdots$ &$0$&$\star$ \\
 $p_{2,n+3}$ & $\star$ & $0$& $\cdots$ & $0$ & $0$\\
 $\vdots$ &   $\vdots$ &   & $\ddots$ &   & $\vdots$ \\
 $p_{n+1,n+3}$  & $\star$ & $\star$ & $\cdots$ &$0$& $0$\\
 $p_{n+2,n+3}$ & $\star$ & $\star$ & $\cdots$ & $\star$ &0
\end{tabular}
\end{center}
where $\star$ represents a non-zero value.  Because it is clear that this
matrix will have rank $n+2$, this completes the proof.
\end{proof}

We are now able to complete the proof the main theorem:

\begin{proof}[{Proof of Theorem \ref{mainresult}}, (2), case (viii).] By Theorem \ref{l=4n=2} and Theorem
\ref{l=n+2}, Question \ref{question} holds for all tuples of the
form $(n,n+2,3,d)$ with $d \geq 3$.
\end{proof}


\end{document}